\numberwithin{equation}{section}
\newcommand{\customitem}[1]{%
\item[\rm#1]\protected@edef\@currentlabel{#1}%
}
\newcommand{\invisible}[1]{}
\newcommand{\citeI}[1]{\cite[#1]{Hofs17}}
\definecolor{aquamarine}{rgb}{0.5, 1.0, 0.83}
\definecolor{aqua}{rgb}{0.0, 1.0, 1.0}
\pgfplotsset{compat=newest}
\theoremstyle{plain}
\newtheorem{theorem}{Theorem}[section]
\newtheorem{corollary}[theorem]{Corollary}
\newtheorem{lemma}[theorem]{Lemma}
\newtheorem{cond}[theorem]{Condition}
\newtheorem{remark}[theorem]{Remark}
\newcommand{\todotemp}[1]{}
\newtheoremstyle{maria}{}{}{}{0cm}{\itshape}{}{\newline}{}
\theoremstyle{maria}
\numberwithin{equation}{section}
\numberwithin{theorem}{section}
\renewcommand{\section}{\@startsection{section}{1}{0mm}{1.5\baselineskip}{\baselineskip}{\normalfont\large\sc}}
\renewcommand{\subsection}{\@startsection{subsection}{2}{0mm}{\baselineskip}{0.5\baselineskip}{\normalfont\sc}}
    \def\cleardoublepage{\clearpage\if@twoside \ifodd\c@page\else
        \hbox{}
        \thispagestyle{empty}
        \newpage
        \if@twocolumn\hbox{}\newpage\fi\fi\fi}
\newcommand{\bfdit}{\boldsymbol{d}}
\newcommand{\Nbold} {{\mathbb N}}
\newcommand{\Ccal}   {\mathcal{C}}
\newcommand{\Dcal}   {\mathcal{D}}
\newcommand{\Gcal}   {\mathcal{G}}
\newcommand{\erdos}{Erd\H{o}s-R\'enyi random graph}
\newcommand{\CMnd}{{\rm CM}_n(\bfdit)}
\newcommand{\CMndprime}{{\rm CM}_{n'}(\bfdit')}
\newcommand{\gdist}[1]{{\rm dist}_{\sss #1}}
\newcommand{\Ver}{o}
\newcommand{\dmax}{d_{\mathrm{max}}}
\newcommand{\Treen}{{\sf Tree}_n}
\newcommand{\hatTreen}{\widehat {\sf Tree}_n}
\newcommand{\Graphn}{{\sf Graph}_n}
\newcommand{\hatGraphn}{\widehat{\sf Graph}_n}
\newcommand {\convd}{\stackrel{d}{\longrightarrow}}
\newcommand {\convp}{\stackrel{\sss {\mathbb P}}{\longrightarrow}}
\newcommand {\convas}{\stackrel{a.s.}{\longrightarrow}}
\renewcommand{\op}{o_{\sss {\mathbb P}}}
\newcommand{\opk}{o_{k, {\sss {\mathbb P}}}}
\newcommand\1{\mathbbm{1}}
\newcommand{\indic}[1]{\1_{\{#1\}}}
\newcommand{\indicwo}[1]{\1_{#1}}
\newcommand{\expec}{{\mathbb{E}}}
\newcommand{\prob}{{\mathbb{P}}}
\newcommand{\Var}{\mathrm{Var}}
\newcommand{\N}{\Nbold}
\newcommand{\sss}   { \scriptscriptstyle }
\newcommand {\vep}{\varepsilon}
\newcommand{\conn}{\longleftrightarrow}
\newcommand{\nc}        { \conn {\hspace{-2.8ex} /} \hspace{1.8ex}   }
\newcommand{\cluster}{\mathscr{C}}
\newcommand{\Cmax}{\mathscr{C}_{\rm max}}
\newcommand{\BP}{{\sf BP}}
\def\eqalign#1\enalign{
    \begin{align}#1\end{align}
    }
\newcommand{\eq}{\begin{equation}}
\newcommand{\en}{\end{equation}}
\newcommand{\ben}{\begin{enumerate}}
\newcommand{\een}{\end{enumerate}}
\newcommand{\eqn}[1]{\begin{equation} #1 \end{equation}}
\newcommand{\eqan}[1]{\eqalign #1 \enalign}
\newcommand{\nn}{\nonumber}
\renewcommand{\to}{\rightarrow}
\newcommand{\ra}{\rightarrow}
\newcommand{\vertex}{o}
\newcommand{\beginsol}[1]{}
\DeclareSymbolFont{extraup}{U}{zavm}{m}{n}
\DeclareMathSymbol{\varheart}{\mathalpha}{extraup}{86}
\DeclareMathSymbol{\vardiamond}{\mathalpha}{extraup}{87}
\newcommand{\ensymboldefinition}{\blacktriangleleft}
\begin{document}

\begin{frontmatter}
\title{The giant in random graphs is almost local}

\begin{aug}
\author{\fnms{Remco} \snm{van der Hofstad}\ead[label=e1]{r.w.v.d.hofstad@tue.nl}}
\address{Department of Mathematics and Computer Science, Eindhoven University of Technology, \printead{e1}}
\end{aug}

\begin{abstract}
Local convergence techniques have become a key methodology to study sparse random graphs. However, convergence of many random graph properties does not directly follow from local convergence. A notable, and important, such random graph property is the size and uniqueness of the giant component. 

We provide a simple criterion that guarantees that local convergence of a random graph implies the convergence of the proportion of vertices in the maximal connected component. We further show that, when this condition holds, the local properties of the giant, as well as its complement, are also described by the local limit. We give several examples where this method gives rise to a novel law of large numbers for the giant, based on results proved in the literature. Aside from these examples, we apply our method to the classical problem of giants in the configuration model as a proof of concept, reproving a well-established result. As a side result of this proof,  we give an extremely simple proof of the small-world nature of the configuration model.
\end{abstract}

\begin{keyword}[class=MSC2020]
\kwd{60K37}
\kwd{05C81}
\kwd{82C27}
\kwd{37A25}
\end{keyword}

\begin{keyword}
\kwd{Random graphs}
\kwd{Local convergence}
\kwd{Giant component}
\kwd{Configuration model}
\end{keyword}

\end{frontmatter}


\section{Introduction}
\label{sec-intro}
{\it Local convergence} techniques, as introduced by Aldous and Steele  \cite{AldSte04} and Benjamini and Schramm \cite{BenSch01}, have become the main methodology to study random graphs in sparse settings, i.e., settings where the average degree remains bounded. Local convergence roughly means that the proportion of vertices whose neighbourhoods have a certain shape converges to some limiting value, which is to be considered as a measure on rooted graphs. We refer to \cite[Chapter 2]{Hofs24} for details. 

The {\em giant component} problem has received enormous attention ever since the very first and seminal results by Erd\H{o}s and R\'enyi on the \erdos{} \cite{ErdRen60}, see also \cite{Boll01, JanKnuLucPit93} for detailed further results for this model. The simplest form of this question is whether there exists a linear-size connected component or not, in many cases a sharp transition occurs depending on a certain graph parameter. See \cite{AloSpe00, ChuLu06c, Durr06, FriKar16, FriKar23, Hofs17, Hofs24, JanLucRuc00}, as well as the references therein, for details.

This paper combines these two threads by investigating the size of the giant component when the random graph converges locally. Consider a sequence of random graphs $(G_n)_{n\geq 1}$, where we will simplify the notation by assuming that $G_n=(V(G_n),E(G_n))$ is such that $|V(G_n)|=n$. For $v\in V(G_n)$, we let $\cluster(v)$ denote its connected component. We let 
	\eqn{
	\label{maximal-component}
	|\Cmax|=\max_{v\in V(G_n)}|\cluster(v)|
	}
denote the maximal component size. When the random graph converges locally to some limit, one would expect that also $|\Cmax|/n\convp \zeta$, where $\zeta$ is the survival probability of the local limit. However, while the {\em number} of connected components is well behaved in the local topology, the {\em proportion of vertices in the giant} is not. Indeed, since local convergence is all about proportions of vertices whose {\em finite} (but arbitrarily large) neighbourhoods converge to a limit, there is a potentially enormous gap between surviving locally and being in the giant. In this paper, we identify the extra condition that is necessary and sufficient for the above natural implication to hold.

This paper has an unusual history, for which we refer to Section \ref{sec-disc-open-problems} for an extensive discussion.

\section{Asymptotics and properties of the giant}
\label{sec-asymp-prop-giant}
In this section, we investigate the behaviour of the giant component $|\Cmax|$ for random graphs that converge locally. In Section \ref{sec-local-conv-in-prob}, we introduce the notion of local convergence in probability. In Section \ref{sec-bds-giant}, we study the asymptotics of $|\Cmax|$ for random graphs that converge locally in probability under necessary and sufficient `giant-is-almost-local' condition, and in Section \ref{sec-prop-giant}, we investigate local properties of the giant. In Section \ref{sec-second-largest}, we provide examples for the law of large numbers of the giant, obtained through bounds proved elsewhere. In Section \ref{sec-GIOL-rev}, we give an alternative for the `giant-is-almost-local' condition that is often more convenient to establish. We start by introducing some useful notation used throughout this paper.

\paragraph{Notation.} We abbreviate left- and right-hand side by lhs and rhs, respectively. 
A sequence of random variables $(X_n)_{n\geq 1}$ {\em converges in probability} to a random variable $X$, denoted as $X_{n}\convp X$, if, for all $\varepsilon>0$, $\prob(|X_{n}-X|>\varepsilon)\rightarrow 0$.
We write $X_n=\op(1)$ when $X_n\convp 0$. 
A sequence of events $(\mathcal{E}_{n})_{n\geq 1}$ is said to hold {\em with high probability} (whp) if $\lim_{n\rightarrow\infty}\prob(\mathcal{E}_{n})=1$.

\subsection{Local convergence in probability}
\label{sec-local-conv-in-prob}
Local weak convergence was introduced independently by Aldous and Steele in \cite{AldSte04} and Benjamini and Schramm in \cite{BenSch01}. The purpose of Aldous and Steele in \cite{AldSte04} was to describe the local structure of the so-called `stochastic mean-field model of distance', meaning the complete graph with i.i.d.\ exponential edge weights. This local description allowed Aldous to prove the celebrated $\zeta(2)$ limit of the random assignment problem \cite{Aldo01}. Benjamini and Schramm in \cite{BenSch01} instead used local weak convergence to show that limits of planar graphs are with probability one recurrent. Since its conception, local convergence has proved a key ingredient in random graph theory. In this section, we provide some basics of local convergence. For more detailed discussions, we refer the reader to \cite{Bord16} or \cite[Chapter 2]{Hofs24}. Let us start with some definitions.
\smallskip

A {\em rooted graph} is a pair $(G,\vertex)$, where $G=(V(G),E(G))$ is a graph with vertex set $V(G)$ and edge set $E(G)$, and $\vertex\in V(G)$ is a vertex. 
Further, a rooted or non-rooted graph is called {\em locally finite} when each of its vertices has finite degree (though not necessarily uniformly bounded). Two (finite or infinite) graphs $G_1=(V(G_1),E(G_1))$ and $G_2=(V(G_2),E(G_2))$ are called {\em isomorphic}, which we write as  $G_1\simeq G_2$, when there exists a bijection $\phi\colon V(G_1)\mapsto V(G_2)$ such that $\{u,v\}\in E(G_1)$ precisely when $\{\phi(u),\phi(v)\}\in E(G_2).$  Similarly, two rooted (finite or infinite) graphs $(G_1, \vertex_1)$ and $(G_2,\vertex_2)$, with $G_i=(V(G_i),E(G_i))$ for $i\in\{1,2\}$, are called {\em isomorphic}, abbreviated as $(G_1, \vertex_1)\simeq (G_2,\vertex_2)$, when there exists a bijection $\phi\colon V(G_1)\mapsto V(G_2)$ such that $\phi(\vertex_1)=\vertex_2$ and $\{u,v\}\in E(G_1)$ precisely when $\{\phi(u),\phi(v)\}\in E(G_2).$ These notions can easily be adapted to multi-graphs (which we will need below), for which $G=(V(G), (x_{i,j})_{i,j\in V(G)})$, where $x_{i,j}$ denotes the number of edges between $i$ and $j$, and $x_{i,i}$ the number of self-loops at $i$. There, instead, the isomorphism $\phi\colon V(G)\mapsto V(G')$ is required to satisfy that $x_{i,j}=x'_{\phi(i),\phi(j)}$, where $(x_{i,j})_{i,j\in V(G)}$ and $(x_{i,j}')_{i,j\in V(G')}$ are the edge multiplicities of $G$ and $G'$, respectively.
\smallskip
 
We let $\mathscr{G}_\star$ be the space of (possibly infinite) connected rooted graphs, where we consider two rooted graphs to be equal when they are isomorphic. Thus, we consider $\mathscr{G}_\star$ as the set of equivalence classes of rooted graphs modulo isomorphisms. The space $\mathscr{G}_\star$ of rooted graphs is a nice metric space, with an explicit metric; see, e.g., \cite[Chapter 2 and Appendix A]{Hofs24} for details.
\smallskip

For a graph $G$ and $u,v\in V(G)$, we let $d_{\sss G}(u,v)$ denote the graph distance between $u$ and $v$, where, by convention, $d_{\sss G}(u,v)=\infty$ when $u$ and $v$ are not connected in $G$. For a rooted graph $(G,\vertex)$, we let $B_r^{\sss(G)}(\vertex)$ denote the (rooted) subgraph of $(G,\vertex)$ of all vertices at graph distance at most $r$ away from $\vertex$. Formally, this means that $B_r^{\sss(G)}(\vertex)=((V(B_r^{\sss(G)}(\vertex)), E(B_r^{\sss(G)}(\vertex)), \vertex)$, where
	\eqan{
	\label{graph-ball}
	V(B_r^{\sss(G)}(\vertex))&=\{u\colon d_{\sss G}(\vertex,u)\leq r\},\\
	E(B_r^{\sss(G)}(\vertex))&=\{\{u,v\}\in E(G)\colon d_{\sss G}(\vertex,u), d_{\sss G}(\vertex,v)\leq r\}.\nn
	}

We say that the graph sequence $(G_n)_{n\geq 1}$ converges {\em locally in probability} to a limit $(G, \vertex)\sim \mu$, when, for every $r\geq 0$ and $H^\star\in \mathscr{G}_\star$,
	\eqn{
	\label{LCP-def}
	\frac{1}{|V(G_n)|} \sum_{v\in V(G_n)} \indic{B_r^{\sss(G_n)}(v)\simeq H^\star} \convp \mu(B_r^{\sss(G)}(o)\simeq H^\star).
	}
This means that the subgraph proportions in the random graph $G_n$ are close, in probability, to those given by $\mu$. Let $\vertex_n\in V(G_n)$ be chosen uniformly at random (uar) in $V(G_n)$. Then, \eqref{LCP-def} is equivalent to the statement that
	\eqn{
	\prob(B_r^{\sss(G_n)}(\vertex_n)\simeq H^\star\mid G_n)\convp \mu(B_r^{\sss(G)}(o)\simeq H^\star).
	}

There are related notions of local convergence, such as {\em local weak convergence}, where \eqref{LCP-def} is replaced by the convergence of {\em expectations}, and  {\em local almost sure convergence}, where the convergence holds almost surely. For our purposes, local convergence in probability is the most convenient, for example since it implies that the neighbourhoods of {\em two} uniformly chosen vertices are asymptotically independent (see e.g., \cite[Corollary 2.18]{Hofs24}), which is central in our proof.

\subsection{Asymptotics of the giant}
\label{sec-bds-giant}
Given a random graph sequence $G_n$ that converges locally in probability to $(G,\vertex)\sim \mu$, one would expect that $|\Cmax|/n\convp \zeta:=\mu(|\cluster(\vertex)|=\infty)$. However, the proportion of vertices in the largest connected component $|\Cmax|/n$ is {\em not} continuous in the local convergence topology, as it is a {\em global} object. It is thus easy to construct counter examples of the fact that $|\Cmax|/n\convp \mu(|\cluster(\vertex)|=\infty)$. However, local convergence still tells us a useful story about the existence of a giant, as well as its size. Indeed, Corollary \ref{cor-giant-UB} shows that the {\em upper bound} is always valid, while Theorem \ref{thm-giant-conv-LWC} shows that a relative simple condition suffices to yield the lower bound as well: 

\begin{corollary}[Upper bound on the giant]
\label{cor-giant-UB}
Let $(G_n)_{n\geq 1}$ be a sequence of graphs having size $|V(G_n)|=n$. 
Assume that $G_n$ converges locally in probability to $(G,\vertex)\sim \mu$. Write $\zeta=\mu(|\cluster(\vertex)|=\infty)$ for the survival probability of the limiting graph $(G,\vertex)$.
Then, for every $\vep>0$ fixed, as $n\rightarrow \infty$,
	\eqn{
	\prob(|\Cmax|\leq n(\zeta+\vep))\ra 1.
	}
\end{corollary}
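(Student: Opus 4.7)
The plan is to control $|\Cmax|$ from above by a \emph{local} statistic—the number of vertices $v$ whose $k$-neighborhood is large—and then invoke local convergence in probability. The core deterministic observation is: for every integer $k \geq 1$,
\[
|\Cmax| \;\leq\; \max\!\bigl(k,\; \bigl|\{v \in V(G_n) : |B_k^{\sss(G_n)}(v)| \geq k+1\}\bigr|\bigr).
\]
Indeed, if $|\Cmax| > k$, then every $v \in \Cmax$ satisfies $|\cluster(v)| \geq k+1$; running breadth-first search from such a $v$, either the eccentricity of $v$ within $\cluster(v)$ is at least $k$, in which case each of the distance layers $0, 1, \dots, k$ contributes at least one vertex (forcing $|B_k^{\sss(G_n)}(v)| \geq k+1$), or the eccentricity is smaller and then $B_k^{\sss(G_n)}(v) = \cluster(v)$ outright. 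Crucially, $\indic{|B_k^{\sss(G_n)}(v)| \geq k+1}$ depends only on the isomorphism class of $B_k^{\sss(G_n)}(v)$, the kind of local functional to which local convergence in probability directly applies.

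Applying local convergence in probability to this indicator (and using the ``two-roots'' consequence recalled just after \eqref{LCP-def}---namely \cite[Corollary 2.18]{Hofs21}---to control the variance of the normalized sum) then gives
\[
\frac{1}{n}\, \bigl|\{v : |B_k^{\sss(G_n)}(v)| \geq k+1\}\bigr| \;\convp\; \mu\!\bigl(|B_k^{\sss(G)}(\vertex)| \geq k+1\bigr).
\]
The next step is to show $\mu(|B_k^{\sss(G)}(\vertex)| \geq k+1) \downarrow \zeta$ as $k \to \infty$. Under $\mu$-a.s.\ local finiteness of $(G,\vertex)$, which holds whenever $\mathbb{E}_\mu[\mathrm{deg}(\vertex)] < \infty$ (in particular under bounded average degree), on the survival event $\{|\cluster(\vertex)| = \infty\}$ the eccentricity of $\vertex$ in its component is infinite, so $|B_k^{\sss(G)}(\vertex)| \geq k+1$ for every $k$, while on $\{|\cluster(\vertex)| = m < \infty\}$ one has $|B_k^{\sss(G)}(\vertex)| = m < k+1$ for every $k \geq m$. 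Dominated convergence then yields the claim.

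To conclude, given $\varepsilon > 0$, choose $k_0$ large enough that $\mu(|B_{k_0}^{\sss(G)}(\vertex)| \geq k_0+1) < \zeta + \varepsilon/2$. Since $k_0$ is fixed while $n(\zeta+\varepsilon) \to \infty$, the deterministic inequality combined with the previous display at $k = k_0$ produces $\prob(|\Cmax| \leq n(\zeta+\varepsilon)) \to 1$. The only genuine subtlety is the $\mu$-a.s.\ local finiteness of the limit, but this is a mild regularity property intrinsic to any reasonable local-convergence framework and, as noted, is automatic in the sparse setting of the paper.
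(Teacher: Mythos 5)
Your proof is correct and takes essentially the same approach as the paper's: reduce $|\Cmax|$ to a local count, namely $\#\{v : |B_k^{\sss(G_n)}(v)| \geq k\}$, which the paper calls $Z_{\sss \geq k}$ via the identity $\{|\cluster(v)| \geq k\} = \{|B_k^{\sss(G_n)}(v)| \geq k\}$, apply local convergence in probability, and send $k \to \infty$. Your variant with $|B_k^{\sss(G_n)}(v)| \geq k+1$ and the explicit discussion of local finiteness are cosmetic differences, and the parenthetical appeal to the two-root correlation bound is unnecessary since \eqref{LCP-def} already supplies the required convergence in probability directly.
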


In particular, Corollary \ref{cor-giant-UB} implies that $|\Cmax|/n\convp 0$ when $\zeta=0$, so that there can only be a giant when the local limit has a positive survival probability. Corollary \ref{cor-giant-UB} is folklore in the random graph literature, and has been used in various forms elsewhere (see e.g., \cite{BolJanRio07, BolRio15}). In most of the works where it is used, however, the local limit is a (branching process) tree. This turns out not to be necessary.  

\proof Define
	\eqn{
	\label{Zk-def}
	Z_{\sss \geq k}=\sum_{v\in V(G_n)} \indic{|\cluster(v)|\geq k}.
	}
Assume that $G_n$ converges locally in probability to $(G,\vertex)\sim \mu$ as defined in \eqref{LCP-def}. 
Since $\{|\cluster(v)|\geq k\}=\{|B_k^{\sss(G)}(v)|\geq k\}$, with $\zeta_{\sss\geq k}=\mu(|\cluster(\vertex)|\geq k)$,
	\eqn{
	\label{Zgeqk-conv}
	\frac{Z_{\sss \geq k}}{n}\convp \zeta_{\sss\geq k}.
	}
For every $k\geq 1$,
	\eqn{
	\{|\Cmax| \geq k\}= \{Z_{\sss \geq k}\geq k\},
	}
and, on the event that $Z_{\sss \geq k}\geq 1 $, also $|\Cmax|\leq Z_{\sss \geq k}$. Note that $\zeta=\lim_{k\rightarrow \infty} \zeta_{\sss\geq k}=\mu(|\cluster(\vertex)|=\infty)$.
We take $k$ so large that $\zeta\geq \zeta_{\sss\geq k}-\vep/2$. Then, for every $k\geq 1$, $\vep>0$, and all $n$ large enough such that $k\leq n(\zeta+\vep)$, 
	\eqan{
	\prob(|\Cmax|\geq n(\zeta+\vep))&\leq \prob(Z_{\sss \geq k}\geq n(\zeta+\vep))\\
	&\leq \prob(Z_{\sss \geq k}\geq n(\zeta_{\sss\geq k}+\vep/2))=o(1),\nn
	}
as required.
\qed
\medskip

We conclude that while local convergence cannot determine the size of the largest connected component, it {\em does} prove an upper bound on $|\Cmax|$. There are many results that extend this to $|\Cmax|/n\convp \zeta=\mu(|\cluster(\vertex)|=\infty)$ (see Section \ref{sec-disc-open-problems} for pointers to the literature), but this is no longer a consequence of local convergence alone. Therefore, in general, more involved arguments must be used. We next prove that {\em one}, relatively simple, condition suffices. In its statement, and for $x,y\in V(G_n)$, we write $x\nc y$ when $x\not\in \cluster(y)$: 

\begin{theorem}[The giant is almost local]
\label{thm-giant-conv-LWC}
Let $G_n=(V(G_n),E(G_n))$ denote a random graph of size $|V(G_n)|=n$. 
Assume that $G_n$ converges locally in probability to $(G,\vertex)\sim \mu$. 
Assume further that
	\eqn{
	\label{almost-local-cond}
	\lim_{k\ra\infty} \limsup_{n\rightarrow \infty}\frac{1}{n^2}\expec\Big[\#\big\{(x,y)\in V(G_n)\times V(G_n)\colon |\cluster(x)|, |\cluster(y)|\geq k, x\nc y\big\}\Big]=0.
	}
Then, with $\cluster_{\sss(2)}$ denoting the second largest connected component (with ties broken arbitrarily),
	\eqn{
	\label{Cmax-C2-asymptotitcs-LWC}
	\frac{|\Cmax|}{n}\convp \zeta=\mu(|\cluster(\vertex)|=\infty),
	\qquad\qquad
	\frac{|\cluster_{\sss(2)}|}{n}\convp 0.
	}
\end{theorem}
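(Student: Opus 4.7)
The plan is to combine the upper bound already established in Corollary~\ref{cor-giant-UB} with a matching lower bound derived from the second-moment-type hypothesis~\eqref{almost-local-cond}. If $\zeta=0$, both assertions in~\eqref{Cmax-C2-asymptotitcs-LWC} follow directly from Corollary~\ref{cor-giant-UB} together with $|\cluster_{\sss(2)}|\leq |\Cmax|$, so I would immediately reduce to the case $\zeta>0$.

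The key deterministic observation is a pair-counting identity. Enumerating the connected components of $G_n$ of size at least $k$ as $\cluster^{(1)},\cluster^{(2)},\ldots$ and writing
\[ N_k := \#\big\{(x,y) \in V(G_n)^2 : |\cluster(x)|,|\cluster(y)|\geq k,\; x\nc y\big\}, \]
I would split ordered pairs of vertices in large components according to whether they lie in the same component, obtaining $Z_{\sss \geq k}^2 = \sum_i |\cluster^{(i)}|^2 + N_k$. Since $\sum_i |\cluster^{(i)}|^2 \leq |\Cmax|\cdot\sum_i|\cluster^{(i)}| = |\Cmax|\cdot Z_{\sss \geq k}$, this yields the crucial inequality
\[ |\Cmax| \;\geq\; Z_{\sss \geq k} - \frac{N_k}{Z_{\sss \geq k}}\qquad \text{whenever } Z_{\sss \geq k} > 0. \]
To exploit it, I would use~\eqref{Zgeqk-conv} to get $Z_{\sss \geq k}/n\convp \zeta_{\geq k}$ with $\zeta_{\geq k}\downarrow \zeta$, and Markov's inequality applied to~\eqref{almost-local-cond} to choose, for any $\eta>0$, a $k_0=k_0(\eta)$ with $\limsup_{n\to\infty}\prob(N_{k_0} \geq \eta n^2) \leq \eta$. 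Fixing $\vep>0$ and then picking $k$ large and $\eta$ small (in that order), the displayed inequality gives $|\Cmax|/n \geq \zeta - O(\vep)$ with probability tending to one, which together with Corollary~\ref{cor-giant-UB} proves the first limit in~\eqref{Cmax-C2-asymptotitcs-LWC}.

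For the second-largest component I would run the same pair-counting in reverse: if $|\cluster_{\sss(2)}|\geq k$, the ordered pairs with one endpoint in $\Cmax$ and the other in $\cluster_{\sss(2)}$ alone contribute $2|\Cmax|\cdot|\cluster_{\sss(2)}|$ to $N_k$, so $|\cluster_{\sss(2)}|\leq N_k/(2|\Cmax|)$. Combined with $|\Cmax|/n\convp \zeta>0$ from the previous step, the fact that $N_k/n^2$ can be forced to be arbitrarily small, and the trivial bound $|\cluster_{\sss(2)}|/n\leq k/n\to 0$ in the complementary case $|\cluster_{\sss(2)}|<k$, this implies $|\cluster_{\sss(2)}|/n\convp 0$.

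The main obstacle is the correct interleaving of the limits $k\to \infty$ and $n\to\infty$. The hypothesis~\eqref{almost-local-cond} is phrased as a $\limsup$ in $n$ followed by a limit in $k$, whereas local convergence only gives control for each fixed $k$ as $n\to\infty$; consequently $k$ has to be chosen first, depending on both $\vep$ and $\eta$, before letting $n\to\infty$, and the two high-probability events $\{|Z_{\sss \geq k}/n-\zeta_{\geq k}|<\vep\}$ and $\{N_k\leq \eta n^2\}$ must be intersected uniformly. Once these parameters are arranged, the remainder of the argument reduces to the deterministic Cauchy--Schwarz-type manipulation of component sizes described above.
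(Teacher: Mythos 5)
Your proof is correct, and it reaches the conclusion by a somewhat more direct route than the paper, though both arguments rest on the same core identity. The paper's proof first establishes Lemma~\ref{lem-sum-squares-clusters}, showing that
$\frac{1}{n^2}\sum_i |\cluster_{\sss(i)}|^2\indic{|\cluster_{\sss(i)}|\geq k}=\zeta^2+\opk(1)$
(this is exactly the content of your identity $Z_{\sss\geq k}^2=\sum_i|\cluster^{(i)}|^2+N_k$ combined with $Z_{\sss\geq k}/n\convp\zeta_{\geq k}$ and $N_k/n^2=\opk(1)$), and then runs a subsequential contradiction argument on the putative limit $a$ of $x_{1,n}=|\Cmax|\indic{|\Cmax|\geq k}/n$: supposing $a=\zeta-\delta$ with $\delta>0$, it deduces $\delta=\zeta$ from the same Cauchy--Schwarz-type manipulation and then gets a contradiction with $\zeta>0$. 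You skip the lemma and the contradiction entirely by rearranging the identity into the explicit deterministic bound
$|\Cmax|\geq Z_{\sss\geq k}-N_k/Z_{\sss\geq k}$,
which combined with $Z_{\sss\geq k}/n\convp\zeta_{\geq k}\geq\zeta$ and the Markov bound on $N_k$ yields the lower bound directly; this is cleaner and has the side benefit that your $|\cluster_{\sss(2)}|$ argument is also immediate from $2|\Cmax|\,|\cluster_{\sss(2)}|\leq N_k$ once $|\Cmax|\geq\zeta n/2$ whp, whereas the paper leaves this step implicit (it follows from Lemma~\ref{lem-sum-squares-clusters} and the first limit). Your handling of the order of quantifiers (pick $\eta$, then $k_0(\eta)$, then let $n\to\infty$, finally send $\eta\to 0$ since the probability being bounded does not depend on $\eta$) is exactly right, and the reduction to $\zeta>0$ via Corollary~\ref{cor-giant-UB} matches the paper's opening remark.
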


Theorem \ref{thm-giant-conv-LWC} shows that a relatively mild condition as in \eqref{almost-local-cond} suffices for the giant to have the expected limit. In fact, we will see that it is  {\em necessary} and sufficient (see Remark \ref{rem-nec-suf} below).  
\smallskip

In the proof below, for $k\geq 1$, it will be convenient to write $X_{n,k}=\opk(1)$ when
	\eqn{
	\lim_{k\ra\infty} \limsup_{n\rightarrow \infty}\prob(|X_{n,k}|>\vep)=0.
	}

\noindent
{\it Proof of Theorem \ref{thm-giant-conv-LWC}.} By Corollary \ref{cor-giant-UB}, it suffices to prove Theorem \ref{thm-giant-conv-LWC} for $\zeta>0$, which we assume from now on. 

Let the vector $(|\cluster_{\sss(i)}|)_{i\geq 1}$ denote the component sizes ordered in size, from large to small with ties broken arbitrarily, so that $\cluster_{\sss(1)}=\Cmax$. Since $|\Cmax|\geq |\cluster_{\sss(i)}|$ for all $i\geq 1$, we can bound
	\eqan{
	\label{Cmax-ratio}
	\frac{|\Cmax|}{n}&\geq 
	\frac{\frac{1}{n^2} \sum_{i\geq 1} |\cluster_{\sss(i)}|^2\indic{|\cluster_{\sss(i)}|\geq k}}{\frac{1}{n} \sum_{i\geq 1} |\cluster_{\sss(i)}|\indic{|\cluster_{\sss(i)}|\geq k}}.
	}
We investigate both numerator and denominator of this expression. For the denominator, by \eqref{Zgeqk-conv},
	\eqn{
	\label{Zgeqk-once-more}
	\frac{1}{n}\sum_{i\geq 1}  |\cluster_{\sss(i)}|\indic{|\cluster_{\sss(i)}|\geq k}
	=\frac{1}{n}\sum_{v\in V(G_n)} \indic{|\cluster(v)|\geq k}=\frac{1}{n}Z_{\sss \geq k}\convp \zeta_{\sss\geq k},
	}
where we recall that $\zeta_{\sss\geq k}=\mu(|\cluster(\vertex)|\geq k)$. Thus, since $\zeta_{\sss\geq k}\rightarrow \zeta$ as $k\rightarrow \infty$,
	\eqn{
	\frac{1}{n}\sum_{i\geq 1}  |\cluster_{\sss(i)}|\indic{|\cluster_{\sss(i)}|\geq k}=\zeta +\opk(1).
	}
For the numerator, we let 
	\eqn{
	\label{Xnk-def}
	X_{n,k}=\frac{1}{n^2} \sum_{i\geq 1} |\cluster_{\sss(i)}|^2\indic{|\cluster_{\sss(i)}|\geq k}-\Big(\frac{1}{n} \sum_{i\geq 1} |\cluster_{\sss(i)}|\indic{|\cluster_{\sss(i)}|\geq k}\Big)^2,
	}
so that the numerator in \eqref{Cmax-ratio} equals
	\eqn{
	\label{sum-squares-Xnk}
	\big(Z_{\sss \geq k}/n\big)^2+X_{n,k}=\zeta^2+X_{n,k}+\opk(1).
	}
It remains to investigate $X_{n,k}$, which we rewrite as
	\eqan{
	\label{rewrite-almost-local-cond}
	X_{n,k}&=\frac{1}{n^2}\sum_{\stackrel{{\large i,j\geq 1}}{i\neq j}}  |\cluster_{\sss(i)}||\cluster_{\sss(j)}|\indic{|\cluster_{\sss(i)}|, |\cluster_{\sss(j)}|\geq k}\nn\\
	&=\frac{1}{n^2} \#\big\{(x,y)\in V(G_n)\times V(G_n)\colon |\cluster(x)|, |\cluster(y)|\geq k, x\nc y\big\}.
	}
By the Markov inequality,
	\eqan{
	\lim_{k\ra\infty} \limsup_{n\rightarrow \infty}\prob(X_{n,k} \geq \vep)
	&\leq  \lim_{k\ra\infty} \limsup_{n\rightarrow \infty}\frac{1}{\vep}\expec[X_{n,k}]=0,\nn
	}
by our main assumption in \eqref{almost-local-cond}. As a result, $X_{n,k}=\opk(1)$.
We conclude that, by \eqref{Cmax-ratio},
	\eqan{
	\frac{|\Cmax|}{n}&\geq \frac{\zeta^2+\opk(1)}{\zeta+\opk(1)}=\zeta+\opk(1).
	}
Since $k$ is arbitrary, this proves that $|\Cmax|\geq n\zeta(1+\op(1))$. Further, by Corollary \ref{cor-giant-UB}, also $|\Cmax|\leq n\zeta(1+\op(1))$. Therefore, $|\Cmax|/n\convp \zeta.$ 

In turn, since  $|\Cmax|/n\convp \zeta$ and by \eqref{Zgeqk-once-more},
	\eqn{
	\frac{1}{n}|\cluster_{\sss(2)}|\indic{|\cluster_{\sss(2)}|\geq k}\leq \frac{1}{n} \sum_{i\geq 2} |\cluster_{\sss(i)}|\indic{|\cluster_{\sss(i)}|\geq k}=\frac{1}{n} \sum_{i\geq 1} |\cluster_{\sss(i)}|\indic{|\cluster_{\sss(i)}|\geq k}-\frac{|\Cmax|}{n}=\opk(1),
	}
which, again since $k$ is arbitrary, implies that $|\cluster_{\sss(2)}|/n\convp 0$.
\qed

\begin{remark}[Proof of necessity of \eqref{almost-local-cond}]
\label{rem-nec-suf} 
{\rm Condition \eqref{almost-local-cond} is also {\em necessary} for $|\Cmax|/n\convp \zeta$ to hold. Indeed, \eqref{rewrite-almost-local-cond} implies that when \eqref{almost-local-cond} fails, there exists a $\kappa>0$ such that
	\eqn{
	\label{almost-local-cond-fails}
	\limsup_{k\rightarrow \infty} \limsup_{n\rightarrow \infty} \expec[X_{n,k}]=\kappa>0.
	}
Then, there exists a subsequence $(n_l)_{l\geq 1}$ for which $\lim_{k\rightarrow \infty}\lim_{l\rightarrow \infty} \expec[X_{n_l,k}]=\kappa$, and, by \eqref{Zgeqk-once-more} and \eqref{Xnk-def},
	\eqan{
	\lim_{l\rightarrow \infty} \frac{1}{n_l^2}\expec[|\Cmax|^2\indic{|\Cmax|\geq k}]
	&\leq \lim_{l\rightarrow \infty} \frac{1}{n_l^2}\expec\Big[\sum_{i}|\cluster_{\sss(i)}|^2\indic{|\cluster_{\sss(i)}|\geq k}\Big]\\
	&=\lim_{l\rightarrow \infty} \frac{1}{n_l^2}\expec[Z_{\sss\geq k}^2-X_{n_l,k}]=\zeta_{\sss\geq k}^2-\kappa,\nn
	}
where $\expec[Z_{\sss\geq k}^2/n_l^2]\rightarrow \zeta_{\sss\geq k}^2$ follows from \eqref{Zgeqk-once-more} and bounded convergence.

Obviously, the same asymptotic bound holds for $\expec[|\Cmax|^2]/n_l^2$ instead of $\expec[|\Cmax|^2\indic{|\Cmax|\geq k}]$, since 
	\[
	\frac{1}{n_l^2}\expec[|\Cmax|^2]\leq \frac{1}{n_l^2}\expec[|\Cmax|^2\indic{|\Cmax|\geq k}] +\frac{k^2}{n_l^2}.
	\]
Letting $k\rightarrow \infty$ and using that $\zeta_{\sss\geq k}\rightarrow \zeta$, we obtain
	\eqn{
	\label{Cmax-second-moment-UB}
	\lim_{l\rightarrow \infty} \frac{1}{n_l^2}\expec[|\Cmax|^2]\leq \zeta^2-\kappa.
	}
By \eqref{Cmax-second-moment-UB}, we conclude that $|\Cmax|/n\convp \zeta$ cannot hold, as by bounded convergence, this would imply that also $\expec[|\Cmax|^2]/n^2\rightarrow \zeta^2$.
}\hfill $\ensymboldefinition$
\end{remark}

\subsection{Local properties of the giant}
\label{sec-prop-giant}
We next extend Theorem \ref{thm-giant-conv-LWC} by investigating the structure of the giant. For this, we first let $v_k(\Cmax)$ denote the number of vertices with degree $k$ in the giant component, and $|E(\Cmax)|$ the number of edges in the giant component. Further, for a graph $G$ and $v\in V(G)$, we write $d^{\sss (G)}_v$ for the degree of $v$ in $G$:

\begin{theorem}[Properties of the giant]
\label{thm-giant-conv-LWC-structure}
Under the assumptions of Theorem \ref{thm-giant-conv-LWC}, when $\zeta= \mu(|\cluster(\vertex)|=\infty)>0$,
	\eqn{
	\label{vell-giant-LWC}
	\frac{v_\ell(\Cmax)}{n}\convp \mu(|\cluster(\vertex)|=\infty, d_\vertex^{\sss (G)}=\ell).
	}
Further, assume that $D_n=d_{\vertex_n}^{\sss(G_n)}$ is uniformly integrable. Then,
	\eqn{
	\label{edge-giant-LWC}
	\frac{|E(\Cmax)|}{n}\convp \tfrac{1}{2} \expec_\mu\Big[d_\vertex^{\sss (G)}\indic{|\cluster(\vertex)|=\infty}\Big].
	}
\end{theorem}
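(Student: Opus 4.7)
}

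The plan is to prove \eqref{vell-giant-LWC} first, and then deduce \eqref{edge-giant-LWC} from it by summing against the degree and controlling the tail with uniform integrability. The key observation underlying \eqref{vell-giant-LWC} is that the event $\{d_v=\ell,\,|\cluster(v)|\geq k\}$ is determined by the rooted $(k-1)$-ball at $v$: since every connected graph on $k$ vertices has diameter at most $k-1$, we have $\{|\cluster(v)|\geq k\}=\{|V(B_{k-1}^{\sss(G_n)}(v))|\geq k\}$, while $\{d_v=\ell\}$ is determined by $B_1^{\sss(G_n)}(v)$. Hence, by local convergence in probability,
\begin{equation*}
\frac{v_\ell^{\sss \geq k}}{n}:=\frac{1}{n}\sum_{v\in V(G_n)}\indic{d_v=\ell,\,|\cluster(v)|\geq k}\convp \mu\bigl(d_\vertex=\ell,\,|\cluster(\vertex)|\geq k\bigr).
\end{equation*}

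Next I would compare $v_\ell^{\sss \geq k}$ to $v_\ell(\Cmax)$. On the event $\{|\Cmax|\geq k\}$, which holds whp by Theorem \ref{thm-giant-conv-LWC} once $\zeta>0$, the difference $v_\ell^{\sss \geq k}-v_\ell(\Cmax)$ is bounded by the total number of vertices sitting in clusters other than $\Cmax$ of size at least $k$, namely $Z_{\sss \geq k}-|\Cmax|$. By \eqref{Zgeqk-conv} and Theorem \ref{thm-giant-conv-LWC}, $\bigl(Z_{\sss \geq k}-|\Cmax|\bigr)/n\convp \zeta_{\sss \geq k}-\zeta$. Since $\zeta_{\sss \geq k}\downarrow \zeta$ and $\mu(d_\vertex=\ell,\,|\cluster(\vertex)|\geq k)\downarrow \mu(d_\vertex=\ell,\,|\cluster(\vertex)|=\infty)$ as $k\to\infty$ by monotone convergence, letting first $n\to\infty$ and then $k\to\infty$ yields \eqref{vell-giant-LWC}.

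For \eqref{edge-giant-LWC}, since $\Cmax$ is a connected component, every neighbour in $G_n$ of a vertex $v\in\Cmax$ also lies in $\Cmax$, so $2|E(\Cmax)|=\sum_{v\in\Cmax}d_v=\sum_{\ell\geq 0}\ell\, v_\ell(\Cmax)$. For any fixed $L$, the finite truncation satisfies
\begin{equation*}
\frac{1}{n}\sum_{\ell\leq L}\ell\,v_\ell(\Cmax)\convp \sum_{\ell\leq L}\ell\,\mu\bigl(d_\vertex=\ell,\,|\cluster(\vertex)|=\infty\bigr)
\end{equation*}
by \eqref{vell-giant-LWC}. For the tail, bound $\frac{1}{n}\sum_{\ell>L}\ell\,v_\ell(\Cmax)\leq \frac{1}{n}\sum_{v}d_v\indic{d_v>L}$; taking expectation gives $\expec[D_n\indic{D_n>L}]$, which by uniform integrability of $D_n$ tends to $0$ as $L\to\infty$ uniformly in $n$. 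Markov's inequality then shows this tail is $\opk$-type in $L$ uniformly in $n$. Combining these, taking $n\to\infty$ followed by $L\to\infty$, and applying monotone convergence on the right-hand side produces $\tfrac12 \expec_\mu[d_\vertex\indic{|\cluster(\vertex)|=\infty}]$, finishing \eqref{edge-giant-LWC}.

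The main technical obstacle is the uniform control of the high-degree tail of $\sum_\ell \ell\, v_\ell(\Cmax)/n$: degree-$\ell$ vertices in $\Cmax$ contribute $\ell$ edges each, and the event $\{v\in\Cmax\}$ is not determined by any finite ball, so one cannot simply apply local convergence to the weighted sum. Uniform integrability of $D_n$ is precisely what is needed to push the truncation level $L\to\infty$ while keeping the error under control; without it, one could only hope for the stated identity in distribution or in a weaker sense.
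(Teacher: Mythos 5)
Your proof is correct, and its overall structure (deduce \eqref{vell-giant-LWC} from local convergence plus $|\Cmax|/n\convp\zeta$, then get \eqref{edge-giant-LWC} by truncating the degree sum and using uniform integrability to kill the tail) is the same as the paper's. The one place you diverge is the lower-bound half of \eqref{vell-giant-LWC}: the paper argues by contradiction, first proving the one-sided inequality $\tfrac{1}{n}\sum_{a\in A}v_a(\Cmax)\leq Z_{\sss A,\geq k}/n$ for any $A\subseteq\N$ and then applying it to $A=\{\ell\}^c$ so that an assumed deficit in $v_\ell(\Cmax)/n$ would force $|\Cmax|/n$ below $\zeta$. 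You instead bound the defect $v_\ell^{\sss\geq k}-v_\ell(\Cmax)$ directly by $Z_{\sss\geq k}-|\Cmax|$ on the event $\{|\Cmax|\geq k\}$ and note that this quantity, divided by $n$, converges to $\zeta_{\geq k}-\zeta\to 0$. This is a genuinely more streamlined packaging: it proves both bounds in one squeeze, makes the mechanism transparent (the error is exactly the mass in ``other large clusters'', which vanishes), and avoids the subsequence-extraction step. The edge count argument, including the estimate $\frac{1}{n}\sum_{\ell>L}\ell v_\ell(\Cmax)\leq\frac{1}{n}\sum_v d_v\indic{d_v>L}$ and the appeal to uniform integrability plus Markov, matches the paper essentially verbatim.
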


\proof We now define, for $k\geq 1$ and $A\subseteq \N$ and with $d_v^{\sss(G_n)}$ the degree of $v$ in $G_n$,
	\eqn{
	\label{ZAk-def}
	Z_{\sss A, \geq k}=\sum_{v\in [n]} \indic{|\cluster(v)|\geq k, d_v^{\sss(G_n)}\in A}.
	}
Assume that $G_n$ converges locally in probability to $(G,\vertex)\sim \mu$. Then, as in \eqref{Zgeqk-once-more},
	\eqn{
	\label{ZAgeqk-conv}
	\frac{Z_{\sss A,\geq k}}{n}\convp \mu(|\cluster(\vertex)|\geq k, d_{\vertex}^{\sss(G)}\in A).
	}
Since $|\Cmax|\geq k$ whp by Theorem \ref{thm-giant-conv-LWC}, we thus obtain, for every $A\subseteq \N$, on the high-probability event $\{|\Cmax|\geq k\},$
	\eqn{
	\frac{1}{n}\sum_{a\in A} v_a(\Cmax)\leq \frac{Z_{\sss A,\geq k}}{n}\convp \mu(|\cluster(\vertex)|\geq k, d_{\vertex}^{\sss(G)}\in A).
	}
Applying this to $A=\{\ell\}^c$, we obtain that, for all $\vep>0$,
	\eqn{
	\label{lim-prob-A-degrees}
	\lim_{n\rightarrow \infty} \prob\Big(\frac{1}{n}\Big[|\Cmax|-v_\ell(\Cmax)\Big]\leq \mu(|\cluster(\vertex)|\geq k, d_{\vertex}^{\sss(G)}\neq \ell)+\vep/2\Big)=1.
	}
We argue by contradiction. Suppose that, for some $\ell$,
	\eqn{
	\label{liminf-prob-too-small}
	\limsup_{n\rightarrow \infty} \prob\Big(\frac{v_\ell(\Cmax)}{n}\leq \mu(|\cluster(\vertex)|=\infty, d_\vertex^{\sss(G)}=\ell)-\vep\Big)=\kappa>0.
	}
Then, along the subsequence $(n_l)_{l\geq 1}$ that attains the limsup in \eqref{liminf-prob-too-small}, with asymptotic probability $\kappa>0$, and using \eqref{lim-prob-A-degrees} and \eqref{liminf-prob-too-small},
	\eqn{
	\frac{|\Cmax|}{n}=\frac{1}{n}[|\Cmax|-v_\ell(\Cmax)]+\frac{v_\ell(\Cmax)}{n}\leq \mu(|\cluster(\vertex)|=\infty) -\vep/2,
	}
which contradicts Theorem \ref{thm-giant-conv-LWC}. We conclude that \eqref{liminf-prob-too-small} cannot hold, so that \eqref{vell-giant-LWC} follows. 

For \eqref{edge-giant-LWC}, we note that
	\eqn{
	|E(\Cmax)|=\tfrac{1}{2} \sum_{\ell\geq 1} \ell v_\ell(\Cmax).
	}
We divide by $n$ and split the sum over $\ell$ in $\ell\in [K]=\{1, \ldots, K\}$ and $\ell>K$ as
	\eqn{
	\label{split-edges-LWC}
	\frac{|E(\Cmax)|}{n}=\frac{1}{2n} \sum_{\ell\in [K]} \ell v_\ell(\Cmax)+\frac{1}{2n} \sum_{\ell>K} \ell v_\ell(\Cmax).
	}
For the first term in \eqref{split-edges-LWC}, by \eqref{vell-giant-LWC},
	\eqan{
	\tfrac{1}{2n} \sum_{\ell\in [K]} \ell v_\ell(\Cmax)
	&\convp \tfrac{1}{2} \sum_{\ell\in [K]} \ell \mu(|\cluster(\vertex)|=\infty, d_\vertex^{\sss(G)}=\ell)\\
	&=\tfrac{1}{2} \expec_\mu\Big[d_\vertex^{\sss(G)}\indic{|\cluster(\vertex)|=\infty, d_\vertex^{\sss(G)}\in [K]}\Big].\nn
	}
For the second term in \eqref{split-edges-LWC}, we bound, with $n_\ell$ the number of vertices in $G_n$ of degree $\ell$,
	\eqn{
	\frac{1}{2n} \sum_{\ell>K} \ell v_\ell(\Cmax)
	\leq \frac{1}{2} \sum_{\ell>K} \ell \frac{n_\ell}{n}=\frac{1}{2} \expec\big[d_{\vertex_n}^{\sss(G_n)}\indic{d_{\vertex_n}^{\sss(G_n)}>K}\mid G_n\big].
	}
By uniform integrability of $(d_{\vertex_n}^{\sss(G_n)})_{n\geq 1}$, 
	\eqn{
	\lim_{K\rightarrow \infty}\limsup_{n\rightarrow \infty}\expec\big[d_{\vertex_n}^{\sss(G_n)}\indic{d_{\vertex_n}^{\sss(G_n)}>K}\big]=0.
	}
As a result, by the Markov inequality and for every $\vep>0$, there exists a $K=K(\vep)<\infty$ such that 
	\eqn{
	\prob\Big(\expec\big[d_{\vertex_n}^{\sss(G_n)}\indic{d_{\vertex_n}^{\sss(G_n)}>K}\mid G_n\big]>\vep\Big)\rightarrow 0.
	}
This completes the proof of \eqref{edge-giant-LWC}.
\qed
\medskip

It is not hard to extend the above analysis to the local convergence in probability of the giant, as well as its complement, as formulated in the following theorem:

\begin{theorem}[Local limit of the giant]
\label{thm-giant-LWL}
Under the assumptions of Theorem \ref{thm-giant-conv-LWC}, when $\zeta= \mu(|\cluster(\vertex)|=\infty)>0$,
	\eqn{
	\label{giant-LWL}
	\frac{1}{n}\sum_{v\in \Cmax} \indic{B_r^{\sss(G_n)}(v)\simeq H_\star} \convp \mu(|\cluster(\vertex)|=\infty, B_r^{\sss(G)}(\vertex)\simeq H_\star),
	}
and
	\eqn{
	\label{non-giant-LWL}
	\frac{1}{n}\sum_{v\not\in \Cmax} \indic{B_r^{\sss(G_n)}(v)\simeq H_\star} \convp \mu(|\cluster(\vertex)|<\infty, B_r^{\sss(G)}(\vertex)\simeq H_\star).
	}
\end{theorem}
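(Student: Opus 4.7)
The plan is to reduce both \eqref{giant-LWL} and \eqref{non-giant-LWL} to a local convergence statement via a cluster-size cutoff $k$, and then take $k\to\infty$. Fix $H_\star\in\mathscr{G}_\star$ and $r\geq 0$, and set $R=\max(r,k-1)$. The key observation is that the joint event $\{B_r^{\sss(G_n)}(v)\simeq H_\star,\,|\cluster(v)|\geq k\}$ is determined by $B_R^{\sss(G_n)}(v)$, because $|\cluster(v)|\geq k$ is equivalent to $|B_{k-1}^{\sss(G_n)}(v)|\geq k$. It is therefore a finite union of events of the form $\{B_R^{\sss(G_n)}(v)\simeq H^\star\}$, so local convergence in probability directly gives
\begin{equation}
\label{plan-trunc}
\frac{1}{n}\sum_{v\in V(G_n)} \indic{B_r^{\sss(G_n)}(v)\simeq H_\star,\,|\cluster(v)|\geq k} \convp \mu\big(B_r^{\sss(G)}(\vertex)\simeq H_\star,\,|\cluster(\vertex)|\geq k\big).
\end{equation}

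Next I would compare the left-hand side of \eqref{giant-LWL} with \eqref{plan-trunc}. Since $|\Cmax|\geq k$ whp by Theorem \ref{thm-giant-conv-LWC}, whp every $v\in \Cmax$ satisfies $|\cluster(v)|\geq k$, and hence
\begin{equation}
\sum_{v\in \Cmax}\indic{B_r^{\sss(G_n)}(v)\simeq H_\star}=\sum_{v\in V(G_n)}\indic{B_r^{\sss(G_n)}(v)\simeq H_\star,\,|\cluster(v)|\geq k}-\sum_{v\notin \Cmax}\indic{B_r^{\sss(G_n)}(v)\simeq H_\star,\,|\cluster(v)|\geq k}.
\end{equation}
The subtracted sum is at most $\sum_{i\geq 2}|\cluster_{\sss(i)}|\indic{|\cluster_{\sss(i)}|\geq k}=Z_{\sss \geq k}-|\Cmax|\indic{|\Cmax|\geq k}$, and by \eqref{Zgeqk-conv} together with Theorem \ref{thm-giant-conv-LWC} it equals $n(\zeta_{\geq k}-\zeta)+\op(n)=\opk(n)$, since $\zeta_{\geq k}\to\zeta$ as $k\to\infty$.

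Combining the last two displays and dividing by $n$ yields
\begin{equation}
\frac{1}{n}\sum_{v\in \Cmax}\indic{B_r^{\sss(G_n)}(v)\simeq H_\star} = \mu\big(B_r^{\sss(G)}(\vertex)\simeq H_\star,\,|\cluster(\vertex)|\geq k\big)+\opk(1),
\end{equation}
and then monotone convergence on the right, as $k\to\infty$, gives \eqref{giant-LWL}. For \eqref{non-giant-LWL}, I would subtract the giant contribution from the unrestricted sum $n^{-1}\sum_v \indic{B_r^{\sss(G_n)}(v)\simeq H_\star}$, whose limit $\mu(B_r^{\sss(G)}(\vertex)\simeq H_\star)$ is immediate from local convergence applied to the single shape $H_\star$, and use additivity of $\mu$ over $\{|\cluster(\vertex)|=\infty\}$ and its complement.

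There is no genuinely hard step here: the argument turns entirely on the observation that $\{B_r^{\sss(G_n)}(v)\simeq H_\star,\,|\cluster(v)|\geq k\}$ is a local event, plus the $\opk(1)$ bound on the aggregate mass of non-giant clusters of size at least $k$. The latter is where Theorem \ref{thm-giant-conv-LWC} — and hence ultimately assumption \eqref{almost-local-cond} — is used. With these two ingredients in hand, the conclusion follows by a clean truncation together with a diagonal limit in $k$.
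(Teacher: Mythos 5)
Your proof is correct, and it rests on exactly the two ingredients the paper relies on: that $\{B_r^{\sss(G_n)}(v)\simeq H_\star,\,|\cluster(v)|\geq k\}$ is a radius-$\max(r,k-1)$ local event so local convergence in probability gives \eqref{plan-trunc}, and that Theorem \ref{thm-giant-conv-LWC} controls the aggregate mass of non-giant large clusters. The paper's proof sketch instead instructs the reader to mimic the proof of Theorem \ref{thm-giant-conv-LWC-structure}: there one first shows the upper bound $n^{-1}\sum_{v\in\Cmax}\indic{B_r^{\sss(G_n)}(v)\simeq H_\star}\leq Z_{\sss \mathscr{H}_\star,\geq k}/n\convp\mu(|\cluster(\vertex)|\geq k,\,B_r^{\sss(G)}(\vertex)\simeq H_\star)$ and then runs a proof by contradiction, using the complementary shape class together with $|\Cmax|/n\convp\zeta$ to show the left-hand side cannot systematically undershoot. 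Your argument replaces that contradiction step with the direct decomposition of $Z_{\sss H_\star,\geq k}$ into the giant and non-giant contributions, and bounds the latter by $Z_{\sss\geq k}-|\Cmax|\indic{|\Cmax|\geq k}=n(\zeta_{\geq k}-\zeta)+\op(n)=\opk(n)$. This is cleaner and makes the $k\to\infty$ truncation more transparent, while relying on the same underlying lemmas; it is a nicer write-up of the same idea rather than a substantively different proof. The derivation of \eqref{non-giant-LWL} by subtracting from the unrestricted sum matches the paper's.
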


\proof The convergence in \eqref{non-giant-LWL} follows from that in \eqref{giant-LWL} combined with the fact that, by assumption,
	\eqn{
	\frac{1}{n}\sum_{v\in V(G_n)} \indic{B_r^{\sss(G_n)}(v)\simeq H_\star} \convp \mu(B_r^{\sss(G)}(\vertex)\simeq H_\star).
	}
The convergence in \eqref{giant-LWL} can be proved as for Theorem \ref{thm-giant-conv-LWC-structure}, now using that
	\eqan{
	\frac{1}{n}Z_{\sss \mathscr{H}_\star, \geq k}&\equiv\frac{1}{n}\sum_{v\in V(G_n)} \indic{|\cluster(v)|\geq k, B_r^{\sss(G_n)}(v)\in \mathscr{H}_\star}\\
	&\convp \mu(|\cluster(\vertex)|\geq k, B_r^{\sss(G)}(\vertex)\in \mathscr{H}_\star),\nn
	}
and, since $|\Cmax|/n\convp \zeta>0$ by Theorem \ref{thm-giant-conv-LWC}, on the high-probability event $\{|\Cmax|\geq k\},$
	\eqn{
	\frac{1}{n}\sum_{v\in \Cmax} \indic{B_r^{\sss(G_n)}(v)\in \mathscr{H}_\star} \leq \frac{1}{n}Z_{\sss \mathscr{H}_\star, \geq k}.
	}
We leave the details to the reader.
\qed

\subsection{Examples through cluster tail bounds outside giant}
\label{sec-second-largest}
In this section, we give examples of how we can apply our results to obtain a law of large numbers for the giant, given a bound on the tail probabilities of connected components outside the giant. These examples will follow from the following corollary:

\begin{corollary}[Law of large numbers giant given bound components outside it]
\label{cor-LLN-outside}
Let $(G_n)_{n\geq 1}$ be a sequence of graphs having size $|V(G_n)|=n$. 
Assume that $G_n$ converges locally in probability to $(G,\vertex)\sim \mu$. Write $\zeta=\mu(|\cluster(\vertex)|=\infty)$ for the survival probability of the limiting graph $(G,\vertex)$. Assume further that
	\eqn{
	\label{bound-outside}
	\lim_{k\rightarrow \infty} \limsup_{n\rightarrow \infty}\prob(|\cluster(o_n)|\geq k, o_n\not\in \Cmax)=0.
	}
Then, as $n\rightarrow \infty$,
	\eqn{
	|\Cmax|/n\convp \zeta.
	}
Further, Theorem \ref{thm-giant-LWL} and \eqref{vell-giant-LWC} in Theorem \ref{thm-giant-conv-LWC-structure} also hold, while \eqref{edge-giant-LWC} in Theorem \ref{thm-giant-conv-LWC-structure} holds when $D_n=d_{\vertex_n}^{\sss(G_n)}$ is uniformly integrable.
\end{corollary}

\proof This is immediate, since 
	\eqan{
	&\frac{1}{n^2}\expec\Big[\#\big\{(x,y)\in V(G_n)\times V(G_n)\colon |\cluster(x)|, |\cluster(y)|\geq k, x\nc y\big\}\Big]\\
	&\qquad \leq 2\prob(|\cluster(o_n)|\geq k, o_n\not\in \Cmax),\nn
	}
since either $x$ or $y$ is not in $\Cmax$ when $x\nc y$.
\qed

\paragraph{Application to spatial inhomogeneous random graphs.}
\label{rem-examples-outside}
Our results apply to various {\em spatial inhomogeneous random graph models}, as a by-product of the work Jorritsma, Komj\'athy and Mitsche \cite{JorKomMit25} on component tails in spatial inhomogeneous random graphs (see also \cite{JorKomMit24} for related results on long-range percolation). This work, combined with the local convergence in \cite{HofHooMai21}, shows that the survival probability of the local limit indeed is the limit of the proportion of vertices in the giant, where we emphasise that the local limit is {\em not} a tree. The identification of the local limit of the giant in Theorem \ref{thm-giant-LWL} then follows immediately, as it was done in \cite[Theorem 25]{BolRio15} for the configuration model, and applies for example also to spatial inhomogeneous random graphs as studied in \cite{JorKomMit25}.
\smallskip

Jorritsma, Komj\'athy and Mitsche \cite{JorKomMit25} prove \eqref{bound-outside} (as well as several related results) for a wide collection of models. These models include {\em geometric inhomogeneous random graphs} (GIRGs), as defined in \cite{BriKeuLen16}, in the case where the degree power-law exponent $\tau$ satisfies $\tau\in(2,3)$. Further, they include {\em hyperbolic random graphs} with degree power-law exponent $\tau\in(2,3)$, as defined in \cite{KriPapKitVahBog10} and studied further in \cite{FouMul18, KomLod20,HofHooMai21}, using the fact that they can be mapped to one-dimensional GIRGs. The law of large number of the giant was already proved in \cite{FouMul18}, the relation to the local limit obviously follows from our work. Let us next formally state the convergence of the giant result.
\smallskip

First, we define the class of GIRGs to which these results apply. We denote the graph by $G_n=(V(G_n),E(G_n))$, and let the vertices $V(G_n)$ of $G_n$ be the points of a Poisson point process on $\tfrac{1}{2}[-n^{1/d}, n^{1/d}]\times [1,\infty)$ with intensity measure
	\eqn{
	\label{mu-i-KSRG}
	\mu({\mathrm d}x, {\mathrm d}w)={\mathrm d}x \times (\tau-1)w^{-\tau} {\mathrm d}w.
	}
For $p\geq0$, $\alpha\in [0,\infty]$, $\sigma\in[0,\infty)$, and $\beta>0$, we let
	\eqn{
	\label{p-i-KSRG}
	p((x,w), (x',w'))=\begin{cases}
				p\min\Big\{1, \Big(\frac{\beta (x\wedge x')(x\vee x')^{\sigma}}{\|x-x'\|}\Big)^\alpha\Big\} &\text{for }\alpha<\infty,\\
				p\indicwo{\big\{\beta (x\wedge x')(x\vee x')^{\sigma}\geq \|x-x'\|\big\}} &\text{for }\alpha=\infty.
				\end{cases}
	}
For $u,v\in V(G_n)$, and given the weights $(w_v)_{v\in V(G_n)}$, we independently let $\{u,v\}\in E(G_n)$ with probability $p((u,w_u),(v,w_v))$, and $\{u,v\}\not\in E(G_n)$ otherwise.

\begin{theorem}[Giant in GIRGs]
\label{thm-giant-GIRG}
Let $G_n$ be the GIRG defined in \eqref{mu-i-KSRG} and \eqref{p-i-KSRG} with $p\geq0$, $\alpha\in [1,\infty]$, $\sigma>0$, and $\beta>0$. Assume that $\tau\in(2,2+\sigma)$. Then $|\Cmax|/n\convp \zeta$ whp, where $\zeta$ is the survival probability of the Poisson infinite GIRG with edge probabilities given by \eqref{mu-i-KSRG} and \eqref{p-i-KSRG}, while $|\cluster_{\sss(2)}|/n\convp 0$.\footnote{This result also appeared in \cite[Theorem 9.38]{Hofs24}. Unfortunately, \cite[Theorem 9.38]{Hofs24} contains incorrect conditions on the parameters, in that the result was claimed for $\alpha\in[0,\infty)$ rather than $\alpha\in[1,\infty)$, $\sigma\in[0,\infty)$ rather than $\sigma>0$, and the condition $\tau\in (2,2+\sigma)$ was missing.}
\end{theorem}

\proof The model we describe is exactly the same is that studied in \cite[(1.5)--(1.7)]{JorKomMit25}. Indeed, due to the construction in \eqref{mu-i-KSRG}, the vertex locations form a homogeneous Poisson point process, while, conditionally on the vertex locations, the weights are iid random variables with density $f_{\sss W}(w)=(\tau-1)w^{-\tau}$ for $w\geq 1$, as in \cite[Assumption 1.3]{JorKomMit25}. Local convergence was proved in \cite{HofHooMai21} (in fact, under much weaker conditions). The condition \eqref{bound-outside} follows from \cite[Theorem 2.1]{JorKomMit25}, so that Theorem \ref{thm-giant-GIRG} follows from Corollary \ref{cor-LLN-outside}.
\qed


\begin{remark}[Relation to second largest component]
\label{rem-examples}
{\rm There are many examples for which linear-size {\em lower bounds} on the giant have been proved, jointly with a sublinear upper bound on the second component. It would be of interest to investigate how a bound on the second largest component can be leveraged to extend our law of large numbers for the giant to this setting. While a bound on the {\em size} of the second largest intuitively seems a huge step, it is not difficult to convince oneself that this does not suffice, as there can be many connected components that are large, but do not contribute to the giant. There are quite a few sublinear bounds on the second largest for graphs whose local limit was identified. For example, for the GIRG studied in \cite{BriKeuLen16}, under appropriate conditions, the local limit was identified in \cite{HofHooMai21}, and the upper bound on the second largest component proved in \cite{BriKeuLen16}. For the hyperbolic random graph, again the local limit was identified in \cite{HofHooMai21}, the lower bound on the giant proved in \cite{BodFouMul15}, and the bound on the second largest component in \cite{KiwMit19}.
}\hfill $\ensymboldefinition$
\end{remark}

\subsection{The `giant is almost local' condition revisited}
\label{sec-GIOL-rev}
The `giant is almost local' condition \eqref{almost-local-cond} is sometimes not so convenient to verify directly, and we now give an alternative form that is often easier to work with. In its statement, we write $\partial B^{\sss(G)}_r(v)$ for the collection of vertices at graph distance equal to $r$ from $v$:

\begin{lemma}[Condition \eqref{almost-local-cond} revisited]
\label{lem-almost-local-cond-rep}
Consider $(G_n)_{n\geq 1}$ under the conditions of Theorem \ref{thm-giant-conv-LWC}. Assume further that there exists $r=r_k\rightarrow \infty$ such that
	\eqn{
	\label{cond-limit}
	\mu(|\cluster(\vertex)|\geq k, |\partial B^{\sss(G)}_r(\vertex)|<r_k)\rightarrow 0.
	}
Then, the `giant is almost local' condition in \eqref{almost-local-cond} holds when
	\eqn{
	\label{almost-local-cond-rep}
	\lim_{r\ra\infty} \limsup_{n\rightarrow \infty}\frac{1}{n^2}\expec\Big[\#\big\{(x,y)\in V(G_n)\times V(G_n)\colon |\partial B^{\sss(G_n)}_r(x)|, |\partial B^{\sss(G_n)}_r(y)|\geq r, x\nc y\big\}\Big]=0.
	}

\end{lemma}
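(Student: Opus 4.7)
The plan is to use the two conditions in \eqref{cond-limit} to swap the non-local event $\{|\cluster(x)|\geq k\}$ for the local event $\{|\partial B_{r_k}^{\sss(G_n)}(x)|\geq r_k\}$ up to a controllable error, thereby reducing \eqref{almost-local-cond} to \eqref{almost-local-cond-rep}.

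Concretely, fix $k$, set $r=r_k$, and abbreviate $A_x=\{|\cluster(x)|\geq k\}$ and $B_x=\{|\partial B_{r_k}^{\sss(G_n)}(x)|\geq r_k\}$. The set decomposition $A_x\subseteq B_x\cup(A_x\cap B_x^c)$ yields the pointwise bound
\[
\indic{A_x}\indic{A_y}\indic{x\nc y}\leq \indic{B_x}\indic{B_y}\indic{x\nc y}+\indic{A_x\cap B_x^c}+\indic{A_y\cap B_y^c}.
\]
Summing over $(x,y)\in V(G_n)^2$, taking expectations and dividing by $n^2$, the first piece on the right becomes exactly the quantity in \eqref{almost-local-cond-rep} evaluated at parameter $r_k$, while the remaining two jointly contribute $\tfrac{2}{n}\expec\big[\sum_{x}\indic{A_x\cap B_x^c}\big]$.

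The main term vanishes when I first take $\limsup_{n}$ and then $k\to\infty$, directly from \eqref{almost-local-cond-rep} and $r_k\to\infty$. For the error, the key observation is that $\{|\cluster(x)|\geq k\}$ is itself a local event: a short BFS argument shows it coincides with $\{|B_{k-1}^{\sss(G_n)}(x)|\geq k\}$ (either the cluster lies entirely inside $B_{k-1}(x)$ and hence the ball already has at least $k$ vertices, or there is a simple path of length $k-1$ emanating from $x$, which again forces $|B_{k-1}(x)|\geq k$). Hence $A_x\cap B_x^c$ is determined by $B_{\max(k-1,r_k)}^{\sss(G_n)}(x)$, and local convergence in probability together with bounded convergence yields
\[
\tfrac{1}{n}\expec\Big[\sum_{x}\indic{A_x\cap B_x^c}\Big]\longrightarrow \mu\big(|\cluster(\vertex)|\geq k,\,|\partial B_{r_k}^{\sss(G)}(\vertex)|<r_k\big),
\]
which tends to $0$ as $k\to\infty$ by the first condition in \eqref{cond-limit}. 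Combining the two bounds establishes \eqref{almost-local-cond}.

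The argument is essentially one inclusion plus one application of local convergence, so the main conceptual point to get right is that $\{|\cluster(x)|\geq k\}$ really is a finite-radius event for fixed $k$; this is what makes $A_x\cap B_x^c$ accessible to the local-limit machinery. The second condition in \eqref{cond-limit} is not needed for this implication, though it is the natural symmetric hypothesis one would use to bound the quantity in \eqref{almost-local-cond-rep} by that in \eqref{almost-local-cond} for the converse direction.
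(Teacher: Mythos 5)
Your proof is correct and follows essentially the same route as the paper: replace the non-local event $\{|\cluster(x)|\geq k\}$ by the local event $\{|\partial B_{r_k}^{\sss(G_n)}(x)|\geq r_k\}$ and control the discrepancy via local convergence in probability plus bounded convergence. The one genuine refinement is that you use the one-sided pointwise bound $\indic{A_x}\indic{A_y}\indic{x\nc y}\leq \indic{B_x}\indic{B_y}\indic{x\nc y}+\indic{A_x\cap B_x^c}+\indic{A_y\cap B_y^c}$, so that only the error term $Z_{\sss <r,\geq k}=\sum_x\indic{A_x\cap B_x^c}$ arises; the paper instead bounds the two-sided discrepancy $|P_k - P_{r_k}^{\sss(2)}|\leq 2n\big[Z_{\sss <r,\geq k}+Z_{\sss \geq r,<k}\big]$ and therefore invokes both parts of \eqref{cond-limit}. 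As you correctly note, for the stated direction (\eqref{almost-local-cond-rep} implies \eqref{almost-local-cond}) only the first condition in \eqref{cond-limit} is actually used, so your argument is slightly leaner. Your side remark that $\{|\cluster(x)|\geq k\}=\{|B_{k-1}^{\sss(G_n)}(x)|\geq k\}$ is also correct and is precisely what makes $A_x\cap B_x^c$ a bounded-radius event, hence accessible to local convergence for each fixed $k$ (the paper uses the same fact implicitly when asserting that $Z_{\sss <r,\geq k}/n$ and $Z_{\sss \geq r,<k}/n$ converge in probability to the corresponding $\mu$-probabilities).
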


\proof Denote 
	\eqan{
	\label{error-terms-simplification}
	P_k&=\#\big\{(x,y)\in V(G_n)\times V(G_n)\colon |\cluster(x)|, |\cluster(y)|\geq k, x\nc y\big\},\\
	P_r^{\sss(2)}&=\#\big\{(x,y)\in V(G_n)\times V(G_n)\colon |\partial B_r^{\sss(G_n)}(x)|, |\partial B_r^{\sss(G_n)}(y)|\geq r, x\nc y\big\}.
	}
Then,
	\eqn{
	P_k
	\leq P_r^{\sss(2)}+ 2n Z_{\sss <r, \geq k},
	}
where
	\eqan{
	Z_{\sss <r, \geq k}&=\sum_{v\in V(G_n)} \indic{|\partial B_r^{\sss(G_n)}(v)|<r, |\cluster(v)|\geq k}.
	}
Therefore,	 by local convergence in probability,
	\eqan{
	\label{diff-conditions}
	\frac{1}{n^2}[P_k-P_r^{\sss(2)}]
	&\leq \frac{2}{n}Z_{\sss <r, \geq k}
	\convp 2\mu(|\cluster(\vertex)|\geq k, |\partial B_r^{\sss(G)}(\vertex)|<r).
	}
Take $r=r_k$ as in \eqref{cond-limit}, so that the rhs of \eqref{diff-conditions} vanishes, and, by Dominated Convergence, also 
	\eqn{
	\lim_{k\ra\infty} \limsup_{n\rightarrow \infty}\frac{1}{n^2}\expec\big[P_{k}-P_{r_k}^{\sss(2)}\big]\leq 0.
	}
We arrive at
	\eqn{
	\lim_{k\ra\infty} \limsup_{n\rightarrow \infty}\frac{1}{n^2}\expec\big[P_k\big]
	\leq \lim_{k\ra\infty} \limsup_{n\rightarrow \infty}\frac{1}{n^2}\expec\big[P_{r_k}^{\sss(2)}\big]
	=0,
	}
by \eqref{almost-local-cond-rep} and since $r_k\rightarrow \infty$ when $k\rightarrow \infty$.
\qed
\medskip

\begin{remark}[Alternative `giant is almost local' condition]
\label{rem-alt-GiaL-cond}
{\rm The assumption in \eqref{almost-local-cond-rep} is sometimes more convenient than \eqref{almost-local-cond}, as it requires that most pairs of vertices with many vertices at distance $r$ are connected to one another. In many random graphs, there is some weak dependence between $B_r^{\sss(G_n)}(x)$ and the graph outside of it. This is more complicated when dealing with $|\cluster(x)|\geq k$. 

The assumption in \eqref{cond-limit} on the local limit is often easily verified. For example, for the unimodular branching processes with bounded offspring, to which we will apply it below, we can take $r_k=k$ and use that, on the event of survival,
	$
	\nu^{-r} |\partial B_r^{\sss(G)}(\vertex)|\convas M,
	$
where $(G,\vertex)$ denotes the unimodular branching process with bounded offspring, $M>0$ on the event of survival by \cite[Theorem 3.9]{Hofs17}, and $\nu$ the expected offspring of this branching process. Therefore, $\mu(|\cluster(\vertex)|\geq k, |\partial B_k^{\sss(G)}(\vertex)|<k)\rightarrow 0$ as $k\rightarrow \infty$. However, there are examples where \eqref{cond-limit} fails, while \eqref{almost-local-cond} holds, so that \eqref{almost-local-cond} and \eqref{almost-local-cond-rep} are not equivalent.}\hfill $\ensymboldefinition$
\end{remark}

\section{Application to the configuration model}
\label{sec-giant-CM}
In this section, we apply our results to the configuration model as introduced by Bollob\'as \cite{Boll80b} in the context of random regular graphs. 
The giant in the configuration model has a long history. It was first investigated by Molloy and Reed \cite{MolRee95, MolRee98} in a setting where the degrees are general. The problem was revisited by Janson and Luczak \cite{JanLuc09} and Bollob\'as and Riordan \cite{BolRio15}, amongst others. This section is organised as follows. We start by introducing the model and stating our results in Section \ref{sec-model-def-res}. We then prove the main result in Section \ref{sec-giant-almost-local-CM}, and state a consequence on typical graph distances, that is proved along the way, in Section \ref{sec-small-world}. Some proofs of technical ingredients are deferred to Appendix \ref{app-A}.

\subsection{Model definition and results}
\label{sec-model-def-res}
The configuration model has the nice property that, when conditioned on being simple, it yields a uniform random graph with the prescribed degree distribution. We refer to \citeI{Chapter 7} for an extensive introduction.

\paragraph{\bf Model definition and assumptions.}
Fix an integer $n$ that will denote the number of vertices in the random graph. Consider a sequence of degrees $\bfdit=(d_i)_{i\in[n]}$. 
Without loss of generality, we assume throughout this paper that $d_j\geq 1$ for all $j\in [n]$, since when $d_j=0$, vertex $j$ is isolated and can be removed from the graph. We assume that the total degree
    	$
    	\ell_n=\sum_{j\in [n]} d_j
    	$
is even.
\smallskip


To construct the multi-graph where vertex $j$ has degree $d_j$ for all $j\in [n]$, we have $n$ separate vertices and incident to vertex $j$, we have $d_j$ half-edges. 
We number the half-edges in an arbitrary order from $1$ to $\ell_n$, and start by randomly connecting the first half-edge with one of the $\ell_n-1$ remaining half-edges. Once paired, two half-edges form a single edge of the multi-graph, and the half-edges are removed from the list of half-edges that need to be paired. 
We continue the procedure of randomly choosing and pairing the half-edges until all half-edges are connected, and call the resulting graph the {\it configuration model with degree sequence $\bfdit$}, abbreviated as $\CMnd$. A careful reader may worry about the order in which the half-edges are being paired. In fact, this ordering turns out to be irrelevant since the random pairing of half-edges is completely {\em exchangeable}. It can even be done in a {\em random} fashion, which will be useful when investigating {\em neighbourhoods} in the configuration model. See e.g., \citeI{Definition 7.5 and Lemma 7.6} for more details.

We denote the degree of a uniformly chosen vertex $\Ver$ in $[n]$ by $D_n=d_{\Ver}$. The random variable $D_n$ has distribution function $F_n$ given by
    \eqn{
    \label{def-Fn-CM}
    F_n(x)=\frac{1}{n} \sum_{j\in [n]} \indic{d_j\leq x},
    }
which is the {\em empirical distribution of the degrees.} Equivalently, $\prob(D_n=k)=n_k/n$, where $n_k$ denotes the number of vertices of degree $k$.
We assume that the vertex degrees satisfy the following \emph{regularity conditions:}

\begin{cond}[Regularity conditions for vertex degrees]
\label{cond-degrees-regcondII}
~\\
{\bf (a) Weak convergence of vertex weight.}
There exists a distribution function $F$ such that, as $n\rightarrow \infty$,
    \eqn{
    \label{Dn-weak-convII}
    D_n\convd D,
    }
where $D_n$ and $D$ have distribution functions $F_n$ and $F$, respectively. 
Further, we assume that $F(0)=0$, i.e., $\prob(D\geq 1)=1$.\\
{\bf (b) Convergence of average vertex degrees.} As $n\rightarrow \infty$,
    \eqn{
    \label{conv-mom-DnII}
    \expec[D_n]\rightarrow \expec[D]<\infty,
    }
where $D_n$ and $D$ have distribution functions $F_n$ and $F$ from part (a), respectively.
\end{cond}
Note that Conditions \ref{cond-degrees-regcondII}(a)-(b) are equivalent to Condition \ref{cond-degrees-regcondII}(a) and uniform integrability of $(D_n)_{n\geq 1}$. While Conditions \ref{cond-degrees-regcondII}(a)-(b) may appear to be quite strong, in fact, they are quite close to assuming uniform integrability of $(D_n)_{n\geq 1}$. Indeed, when $(D_n)_{n\geq 1}$ is uniformly integrable, then there is a subsequence along which $D_n$ converges in distribution as in Condition \ref{cond-degrees-regcondII}(a), and along this subsequence also Condition \ref{cond-degrees-regcondII}(b) holds. We can then apply our results along this subsequence. 

\paragraph{\bf The giant in the configuration model.}
We now come to our result on the connected components in the configuration model $\CMnd$, where the degrees $\bfdit=(d_i)_{i\in[n]}$ satisfy Conditions \ref{cond-degrees-regcondII}(a)-(b): 

\begin{theorem}[Phase transition in $\CMnd$]
\label{thm-convtwolargestcomps}
Suppose that Conditions \ref{cond-degrees-regcondII}(a)-(b) hold and consider the random graph $\CMnd$, letting $n\to\infty$. Assume that $p_2=\prob(D=2)<1$. 
\ben
\item[(a)] If $\nu=\expec[D(D-1)]/\expec[D]>1$, then there exist
$\xi\in [0,1),\zeta \in(0,1]$ such that
	\begin{eqnarray*}
	|\Cmax|/n&\convp& \zeta\text{,}\\
	v_k(\Cmax)/n&\convp& \prob(D=k)(1-\xi^k)\quad\text{ for every }k\geq 0\text{,}\\
	|E(\Cmax)|/n&\convp& \tfrac{1}{2}\expec[D] (1-\xi^2).
	\end{eqnarray*}
while $|\cluster_{\sss(2)}|/n \convp 0$ and $|E(\cluster_{\sss(2)})|/n\convp 0$.

\item[(b)] If $\nu=\expec[D(D-1)]/\expec[D]\leq 1$, then $|\Cmax|/n\convp0$ and $|E(\Cmax)|/n\convp0$.
\een
\end{theorem}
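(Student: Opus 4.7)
The natural route is to invoke the machinery of Sections \ref{sec-asymp-prop-giant}--\ref{sec-prop-giant} once local convergence in probability of $\CMnd$ has been established. The first step is therefore to identify the local limit: under Conditions \ref{cond-degrees-regcondII}(a)--(b), $\CMnd$ converges locally in probability to the unimodular Galton--Watson tree rooted at $\vertex$, in which $\vertex$ has offspring distribution $D$ and every subsequent vertex has offspring distribution $D^\star-1$, where $D^\star$ is the size-biased version of $D$. This is a classical computation carried out by exploring neighborhoods via a breadth-first pairing of half-edges and exploiting the exchangeability of the pairing; the convergence holds in probability, rather than only in expectation, because explorations started from two independent uniform roots decouple (see \citeII{Chapter 4}). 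This identifies the survival probability as $\zeta=1-\expec[\xi^D]$, where $\xi$ is the smallest solution in $[0,1]$ of $\xi=\expec[\xi^{D^\star-1}]$.

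Part (b) is then immediate: when $\nu\leq 1$, the size-biased branching process is (sub)critical and dies out almost surely, so $\zeta=0$ and Corollary \ref{cor-giant-UB} yields $|\Cmax|/n\convp 0$. The bound $|E(\Cmax)|\leq \tfrac{1}{2}\sum_{v}d_v\indic{|\cluster(v)|\geq k}$, combined with uniform integrability of $(D_n)_{n\geq 1}$ inherited from Condition \ref{cond-degrees-regcondII}(b), then gives $|E(\Cmax)|/n\convp 0$.

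For the supercritical part (a), the core task is to verify the giant-is-almost-local condition \eqref{almost-local-cond}, which I would do via the more convenient form \eqref{almost-local-cond-rep} of Lemma \ref{lem-almost-local-cond-rep}. The hypothesis \eqref{cond-limit} holds with $r_k=k$: on the event of survival, the Kesten--Stigum theorem gives $\nu^{-r}|\partial B_r(\vertex)|\to M>0$ almost surely in the unimodular tree, while on extinction the boundary vanishes eventually. The substantial step, and the \emph{main obstacle}, is to show that in $\CMnd$ two vertices $x,y$ whose $r$-boundaries each contain at least $r$ half-edges are connected with probability tending to $1$ as $r\to\infty$, uniformly in $n$. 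The approach is a sprinkling argument: jointly explore $B_r(x)$ and $B_r(y)$ using the exchangeability of the pairing, leaving $\Theta(\nu^r)$ free half-edges on each boundary; then continue to grow $x$'s cluster for $s$ further generations, and note that, since each pairing is uniform over the surviving half-edges, the probability that none of the $\Theta(\nu^{r+s})$ half-edges leaving the grown $x$-cluster attaches to any of the $\Theta(\nu^r)$ free $y$-boundary half-edges is bounded by $(1-c\nu^r/\ell_n)^{c'\nu^{r+s}}$, which vanishes once $s$ is chosen so that $\nu^{r+s}/\ell_n$ is large. One then checks that pairs whose explorations collide (and therefore no longer fit the two-independent-trees picture) contribute negligibly, for which a truncation to bounded degrees as developed in Appendix \ref{app-A} is tailor-made.

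With \eqref{almost-local-cond} in hand, Theorem \ref{thm-giant-conv-LWC} delivers $|\Cmax|/n\convp \zeta$ and $|\cluster_{\sss(2)}|/n\convp 0$, while Theorem \ref{thm-giant-conv-LWC-structure} gives
$v_k(\Cmax)/n\convp \mu(|\cluster(\vertex)|=\infty,\,d_\vertex=k)=p_k(1-\xi^k)$,
since, conditional on the root having degree $k$, survival is equivalent to at least one of its $k$ independent size-biased subtrees surviving, an event of probability $1-\xi^k$. Finally, \eqref{edge-giant-LWC} together with uniform integrability of $D_n$ yields
\eqn{
\frac{|E(\Cmax)|}{n}\convp \tfrac{1}{2}\expec_\mu\big[d_\vertex\indic{|\cluster(\vertex)|=\infty}\big]
=\tfrac{1}{2}\sum_k k p_k(1-\xi^k)=\tfrac{1}{2}\expec[D](1-\xi^2),
}
where the last identity uses $\expec[D\xi^D]=\expec[D]\,\expec[\xi^{D^\star}]$ and the fixed-point relation $\expec[\xi^{D^\star}]=\xi\,\expec[\xi^{D^\star-1}]=\xi^2$. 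The same ingredients give $|E(\cluster_{\sss(2)})|/n\convp 0$, completing the proof.
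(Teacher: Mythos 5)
Your plan reproduces the paper's architecture: identify the unimodular Galton--Watson local limit, dispose of part (b) via Corollary \ref{cor-giant-UB}, prove the almost-local condition in the form \eqref{almost-local-cond-rep}, and read off part (a) from Theorems \ref{thm-giant-conv-LWC} and \ref{thm-giant-conv-LWC-structure}; the algebra $\expec[D\xi^D]=\expec[D]\,\xi^2$ matches the paper's reformulation. That part is aligned with the paper's proof.

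The step you correctly flag as the main obstacle, however, does not close as stated. You keep the $y$-boundary at size $\Theta(\nu^r)$, \emph{fixed in $n$}, and only grow the $x$-exploration for $s$ further generations, then invoke $(1-c\nu^r/\ell_n)^{c'\nu^{r+s}}\approx\exp(-c\,\nu^{2r+s}/\ell_n)$. With the $y$-side stuck at $\nu^r$, making this exponent large forces $\nu^{r+s}\gtrsim \ell_n/\nu^r=\Theta(n)$: the $x$-exploration must consume a positive fraction of all half-edges. At that scale the very estimate $|\partial B_{r+s}(x)|=\Theta(\nu^{r+s})$ you are using is no longer available --- the exploration-to-branching-process coupling (Lemma \ref{lem-coupling-CM}) holds only up to $o(\sqrt{n/\dmax})$ vertices, even the refined coupling of Lemma \ref{lem-coupling-CM-beyond} stays well below linear scale, and depletion destroys geometric growth there. (Your stated criterion that $\nu^{r+s}/\ell_n$ be large is worse still: it is unattainable outright, as no cluster contains more than $\ell_n$ half-edges.) The paper sidesteps this by growing \emph{both} explorations into the $\sqrt{n}$ regime: $\underline{m}_n=o(\sqrt{n})$ for $\Ver_1$ with a perfect coupling, and $\bar{m}_n\gg\sqrt{n}$ for $\Ver_2$ with a coupling error of order $(\bar{m}_n^2/\ell_n)^{1+\delta}$, choosing $\underline{m}_n\bar{m}_n\gg n$. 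Each exploration then remains in a range where Lemmas \ref{lem-asy-bar-bns-CM} and \ref{lem-Devent-n-whp} keep the geometric boundary growth under control, yet the product of boundary sizes already forces a direct connection between the two boundaries. The fix is to symmetrize your sprinkling so neither exploration approaches linear scale.
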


We prove Theorem \ref{thm-convtwolargestcomps} in Section \ref{sec-giant-almost-local-CM} below. We now remark upon the result and on the conditions arising in it.

\paragraph{\bf Local structure of $\Cmax$.}
Theorem \ref{thm-convtwolargestcomps} is the main result for $\CMnd$ proved in \cite{JanLuc09}. The result in \cite{BolRio15} only concerns $|\Cmax|/n$. Related results are in \cite{MolRee95,MolRee98,Rior12}. Many of these proofs use an {\em exploration} of the connected components in the graph, 
in \cite{MolRee95,MolRee98} in discrete time and in \cite{JanLuc09} in continuous time. Further, \cite{BolRio15} relies on concentration inequalities combined with a sprinkling argument. For $\CMnd$, \cite[Theorem 25]{BolRio15} identifies the local structure of the giant that follows from Theorem \ref{thm-giant-LWL}. \cite[Theorem 25]{BolRio15} also applies to uniform simple graphs with given degrees.

\paragraph{\bf Local convergence of configuration models.}
The fact that the configuration model converges locally in probability is well-established, and is the starting point for the proof of Theorem \ref{thm-convtwolargestcomps} using Theorems \ref{thm-giant-conv-LWC} and \ref{thm-giant-conv-LWC-structure}. We refer to Appendix \ref{app-LC-CM} for more details on local convergence of $\CMnd$. Dembo and Montanari \cite{DemMon10a} crucially rely on it in order to identify the limiting pressure for the Ising model on the configuration model. Many alternative proofs exist. We use some of the ingredients in \cite[Proof of Theorem 4.1]{Hofs24}, since these are helpful in the proof of Theorem \ref{thm-convtwolargestcomps} as well. We also refer to the lecture notes by Bordenave \cite{Bord16} for a nice exposition of local convergence proofs for the configuration model. Further, Bordenave and Caputo \cite{BorCap15} prove that the neighbourhoods in the configuration model satisfy a large deviation principle at speed $n$ in the context where the degrees of the configuration model are bounded. 
\smallskip

Let us now describe the local limit of $\CMnd$ subject to Conditions \ref{cond-degrees-regcondII}(a)-(b). The root has offspring distribution $(p_k)_{k\geq 1}$ where $p_k=\prob(D=k)$ and $D$ is from Condition \ref{cond-degrees-regcondII}(a), while all other individuals in the tree have offspring distribution $(p_k^\star)_{k\geq 0}$ given by
	\eqn{
	\label{pk-star-def}
	p_k^\star =\frac{k+1}{\expec[D]}\prob(D=k+1).
	}
The distribution $(p_k^\star)_{k\geq 0}$ has the interpretation of the {\em forward degree} of a uniform edge. The above branching process is a so-called {\em unimodular} branching process (recall \cite{AldLyo07}) with root offspring distribution $(p_k)_{k\geq 1}$.


\paragraph{\bf Reformulation in terms of branching processes.}
We next interpret the results in Theorem \ref{thm-convtwolargestcomps} in terms of our unimodular branching processes. In terms of this, $\xi$ is the extinction probability of a branching process with offspring distribution $(p_k^\star)_{k\geq 0}$, and $\zeta$ is the survival probability of the unimodular branching process with root offspring distribution $(p_k)_{k\geq 1}$. Thus, $\zeta$ satisfies
    \eqn{
    \label{zeta-def-CM}
    \zeta=\sum_{k\geq 1} p_k(1-\xi^k),
    }
with $\xi$ the smallest solution to
    \eqn{
    \label{xi-def-CM}
    \xi=\sum_{k\geq 0}p_k^\star \xi^k.
    }
By branching process theory (see e.g., \cite[Theorem 3.1]{Hofs17}) $\xi=1$ precisely when $\nu\leq 1$, where
    \eqn{
    \nu=\sum_{k\geq 0} kp_k^\star 
    =\frac{1}{\expec[D]}\sum_{k\geq 0} k(k+1)p_{k+1}
    =\expec[D(D-1)]/\expec[D],
    }
by \eqref{pk-star-def}. This explains the condition on $\nu$ in Theorem \ref{thm-convtwolargestcomps}(a). Further, to understand the asymptotics of $v_k(\Cmax)$, we note that there are $n_k\approx np_k$ vertices with degree $k$. Each of the $k$ direct neighbours of a vertex of degree $k$ survives with probability close to $1-\xi$, so that the probability that at least one of them survives is close to $1-\xi^k$. When one of the neighbours of the vertex of degree $k$ survives, the vertex itself is part of the giant component, which explains why $v_k(\Cmax)/n\convp p_k(1-\xi^k)$. Finally, an edge consists of two half-edges, and an edge is part of the giant component precisely when one of the vertices incident to it is, which occurs with asymptotic probability $1-\xi^2$. There are in total $\ell_n/2=n\expec[D_n]/2\approx n \expec[D]/2$ edges, which explains why $|E(\Cmax)|/n\convp\frac{1}{2}\expec[D] (1-\xi^2)$. Therefore, all results in Theorem \ref{thm-convtwolargestcomps} have a simple explanation in terms of the branching-process approximation of the connected component for $\CMnd$ of a uniform vertex
in $[n]$.

\paragraph{\bf The condition $\prob(D=2)=p_2<1$.}
The case $p_2=1$, for which $\nu = 1$, is quite exceptional, and quite different limits for $|\Cmax|/n$ can occur, see \cite[Section 4.2]{Hofs24} and \cite{Fede20} for more details.

\subsection{The `giant component is almost local' proof}
\label{sec-giant-almost-local-CM}
In this section, we prove Theorem \ref{thm-convtwolargestcomps} using the `giant is almost local' results in Theorems \ref{thm-giant-conv-LWC} and \ref{thm-giant-conv-LWC-structure}. We start by setting the stage.

\paragraph{\bf Setting the stage for the proof of Theorem \ref{thm-convtwolargestcomps}.}
Theorem \ref{thm-convtwolargestcomps}(b) follows directly from Corollary \ref{cor-giant-UB} combined with the local convergence in probability discussed below Theorem \ref{thm-convtwolargestcomps} and the fact that, for $\nu\leq 1$, the unimodular branching process with root offspring distribution $(p_k)_{k\geq 0}$ given by $p_k=\prob(D=k)$ dies out.
\smallskip

Theorem \ref{thm-convtwolargestcomps}(a) follows from Theorem \ref{thm-giant-conv-LWC-structure}, together with the facts that, for the unimodular branching process with root offspring distribution $(p_k)_{k\geq 0}$ given by $p_k=\prob(D=k)$,
	\eqn{
	\mu(|\cluster(\vertex)|=\infty, d_\vertex=\ell)=p_\ell(1-\xi^\ell),
	}
and
	\eqn{
	\expec_\mu\Big[d_\vertex\indic{|\cluster(\vertex)|=\infty}\Big]=\expec[D] (1-\xi^2).
	}
Thus, it suffices to check the assumptions to Theorem \ref{thm-giant-conv-LWC-structure}. The uniform integrability of $D_n$ follows from Conditions \ref{cond-degrees-regcondII}(a)-(b).
For the conditions in Theorem \ref{thm-giant-conv-LWC}, the local convergence in probability is discussed below Theorem \ref{thm-convtwolargestcomps}, so we are left to proving the crucial hypothesis in \eqref{almost-local-cond}, which the remainder of the proof will do.
\smallskip

To start with our proof of \eqref{almost-local-cond}, applied to $\CMnd$, we first use the alternative formulation from Lemma \ref{lem-almost-local-cond-rep}, and note that \eqref{cond-limit} holds for the limiting unimodular branching process (recall Remark \ref{rem-alt-GiaL-cond}). Then, with $\vertex_1,\vertex_2\in[n]$ chosen independently and uar,
	\eqan{
	&\frac{1}{n^2}\expec\Big[\#\big\{(x,y)\in V(G_n)\colon |\partial B_r^{\sss(G_n)}(x)|, |\partial B_r^{\sss(G_n)}(y)|\geq r, x\nc y\big\}\Big]\\
	&\qquad=\prob(|\partial B_r^{\sss(G_n)}(\Ver_1)|, |\partial B_r^{\sss(G_n)}(\Ver_2)|\geq r, \Ver_1\nc \Ver_2).\nn
	}
Thus, our main aim is to show that
	\eqn{
	\label{aim-PT-CM}
	\lim_{r\rightarrow \infty} \limsup_{n\rightarrow \infty} \prob(|\partial B_r^{\sss(G_n)}(\Ver_1)|, |\partial B_r^{\sss(G_n)}(\Ver_2)|\geq r, \Ver_1\nc \Ver_2)=0.
	}

\paragraph{\bf Coupling to $n$-dependent branching process.}
We next relate the neighbourhood in a random graph to a certain $n$-dependent unimodular branching process where the root has offspring distribution $D_n$ in Condition \ref{cond-degrees-regcondII}. Such a coupling has previously appeared in \cite{BhaHofHoo17}. Since the branching process is unimodular, all other individuals have offspring distribution $D_n^\star-1$,  where 
	\eqn{
	\prob(D_n^\star=k)=\frac{k}{\expec[D_n]}\,\prob(D_n=k),\qquad k\in\N,
	}
is the size-biased distribution of $D_n$. Denote this branching process by $(\Treen(t))_{t\in\N_0}$. Here, $\Treen(t)$ denotes the branching process when precisely $t$ vertices have been explored, and we explore it in the breadth-first order. Clearly, by Conditions \ref{cond-degrees-regcondII}(a)-(b), $D_n\convd D$ and $D_n^\star\convd D^\star$, which implies that $\Treen(t)\convd \BP(t)$ for every $t$ finite, where $\BP(t)$ is the restriction of the unimodular branching process $\BP$ with root offspring distribution $(p_k)_{k\geq 1}$ for which $p_k=\prob(D=k)$ to its first $t$  individuals.

Below, we extend the coupling of the graph exploration in $\CMnd$ and $(\Treen(t))_{t\in\N_0}$ significantly. For this, we let $(\Graphn(t))_{t\in\N_0}$ denote the graph exploration process from a uniformly chosen vertex $\Ver\in[n]$. Here $\Graphn(t)$ is the graph exploration after pairing $t$ half-edges, in the breadth-first manner.  In particular, from $(\Graphn(t))_{t\in\N_0}$ we can retrieve $(B_r^{\sss(G_n)}(\vertex))_{r\in\N_0}$. The following lemma shows that we can couple the graph exploration to the unimodular branching 
process in such a way that $(\Graphn(t))_{0\leq t\leq m_n}$ is equal to $(\Treen(t))_{0\leq t\leq m_n}$ whenever $m_n\rightarrow \infty$ sufficiently slowly. In the statement, we write $(\hatGraphn(t), \hatTreen(t))_{t\in\N_0}$ for the coupling of $(\Graphn(t))_{0\leq t\leq m_n}$ and $(\Treen(t))_{0\leq t\leq m_n}$, and write $(\hatGraphn(t))_{0\leq t\leq m}\neq (\hatTreen(t))_{0\leq t\leq m}$ when a miscoupling occurs in the first $m$ generations:

\begin{lemma}[Coupling graph exploration and branching process]
\label{lem-coupling-CM}
Subject to Conditions \ref{cond-degrees-regcondII}(a)-(b), there exists a coupling $(\hatGraphn(t), \hatTreen(t))_{t\in\N_0}$ of $(\Graphn(t))_{0\leq t\leq m_n}$ and $(\Treen(t))_{0\leq t\leq m_n}$ such that
	\eqn{
	\label{Graph-BP}
	\prob\Big((\hatGraphn(t))_{0\leq t\leq m_n}\neq (\hatTreen(t))_{0\leq t\leq m_n}\Big)=o(1),
	}
when $m_n=o(\sqrt{n/\expec[D_n^2]})\rightarrow \infty$, where $\expec[D_n^2]=\frac{1}{n}\sum_{v\in [n]}d_v^2$ denotes the average of the squares of the degrees.
\end{lemma}

Lemma \ref{lem-coupling-CM} also implies that the proportion of vertices whose $r$ neighbourhood is isomorphic to a specific tree converges to the probability that the unimodular branching process with root offspring distribution $(p_k)_{k\geq 1}$ is isomorphic to this tree, which is a crucial ingredient in local convergence. Lemma \ref{lem-coupling-CM} is a more precise version of \cite[Lemma 4.2]{Hofs24}.

\begin{remark}[Conditions and extensions]
\label{rem-extensions-lemma-coupling-CM}
{\rm Note that $\expec[D_n^2]=\Theta(\dmax)$, where $\dmax$ is the maximal degree in the graph. We will apply Lemma \ref{lem-coupling-CM} in the setting where the degrees are uniformly bounded. However, it holds more generally under Conditions \ref{cond-degrees-regcondII}(a)-(b). Indeed, note that $\dmax=o(n)$ follows from Conditions \ref{cond-degrees-regcondII}(a)-(b). Further, Lemma \ref{lem-coupling-CM} can easily be extended to  explorations from {\em two} independent sources $(\Ver_1,\Ver_2)$, where we can still take $m_n=o(\sqrt{n/\expec[D_n^2]})$, and the two branching processes to which we couple the exploration from two sources, denoted by $(\hatTreen^{\sss(1)}(t))_{0\leq t\leq m_n}$ and $(\hatTreen^{\sss(2)}(t))_{0\leq t\leq m_n}$, are independent.\hfill $\ensymboldefinition$
}
\end{remark}
We refer to Appendix \ref{app-coupling-B-BP} for the proofs of Lemma \ref{lem-coupling-CM} and Remark \ref{rem-extensions-lemma-coupling-CM}.
\smallskip

\paragraph{\bf Start of proof for configuration models with bounded degrees.} We start by proving the result for configuration models with {\it bounded degrees}, i.e., for now we assume that $\max_{v\in[n]}d_v\leq b$. 
Take an arbitrary $\underline{m}_n=o(\sqrt{n})$, then Remark \ref{rem-extensions-lemma-coupling-CM} shows that whp we can {\em perfectly} couple $(B_k^{\sss(G_n)}(\Ver_1))_{k\leq \underline{k}_n}$ to a unimodular branching processes $(\BP_k^{\sss(1)})_{k\leq \underline{k}_n}$ with root offspring distribution $(\prob(D_n=k))_{k\geq 1}$. Here $|\BP_k^{\sss(1)}|$ is the size of the $k$th generation in our $n$-dependent branching process, while we define
	\eqn{
	\underline{k}_n=\inf\big\{k\colon |\BP_k^{\sss(1)}|\geq \underline{m}_n\big\}.
	}
Since all degree are bounded by $b$, we have that $|\BP_{\underline{k}_n}^{\sss(1)}|\leq b\underline{m}_n=\Theta(\underline{m}_n)$. Below, we will need that we can even extend the coupling up to generation $\underline{k}_n+1$, since $|\BP_{\underline{k}_n+1}^{\sss(1)}|\leq (b+(b-1)^2)\underline{m}_n=\Theta(\underline{m}_n)$, and then the same bounds hold for $(B_k^{\sss(G_n)}(\Ver_1))_{k\leq \underline{k}_n}$ on the event of perfect coupling. Let $\Ccal_n(1)$ denote the {\em perfect coupling event} from vertex $\Ver_1$, so that
	\eqn{
	\label{Cevent1-n-def-CM}
	\Ccal_n(1)=\big\{(|\partial B_k^{\sss(G_n)}(\Ver_1)|)_{k\leq \underline{k}_n+1}=(|\BP_k^{\sss(1)}|)_{k\leq \underline{k}_n+1}\big\},
	\quad \text{and}\quad 
	\prob(\Ccal_n(1))=1-o(1).
	}

We extend the above coupling to also deal with vertex $\Ver_2$, for which we explore a little further, but do not requite perfect coupling. For this, we start by defining the necessary notation. Let $(\BP_k^{\sss(2)})_{k\geq 0}$ be an $n$-dependent unimodular branching process independent of $(\BP_k^{\sss(1)})_{k\geq 0}$. For $\overline{m}_n\geq \underline{m}_n$, we let
	\eqn{
	\bar{k}_n=\inf\big\{k\colon |\BP_k^{\sss(2)}|\geq \overline{m}_n\big\},
	}
and, again since all degrees are bounded, $|\BP_{\bar{k}_n+1}^{\sss(2)}|\leq (b+(b-1)^2)\overline{m}_n=\Theta(\overline{m}_n)$. Further, let $\delta>0$ be a sufficiently small constant to be specified later, and define the {\em near-perfect coupling event} from vertex $\Ver_2$ to be
	\eqn{
	\label{Cevent2-n-def-CM}
	\Ccal_n(2)=\big\{\big||\partial B_{k}^{\sss(G_n)}(\Ver_2)|-|\BP_k^{\sss(2)}|\big|\leq (\overline{m}_n^2/\ell_n)^{1+\delta}~\forall k\in[\bar{k}_n+1]\big\}.
	}
With $\underline{m}_n=o(\sqrt{n})$, we will later pick $\overline{m}_n$ such that $\underline{m}_n\overline{m}_n\gg n$ to reach our conclusion. The following lemma shows that also $\Ccal_n(2)$ occurs whp:

\begin{lemma}[Coupling beyond Lemma \ref{lem-coupling-CM}]
\label{lem-coupling-CM-beyond}
Consider $\CMnd$ and let $\overline{m}_n^2/\ell_n\rightarrow \infty$. Then, for every $\delta>0$, with the near-perfect coupling event $\Ccal_n(2)$ defined in \eqref{Cevent2-n-def-CM},
	\eqn{
	\prob(\Ccal_n(2))=1-o(1).
	}
\end{lemma}

For the proof, we refer to Appendix \ref{app-coupling-B-BP}.
We now define the {\em successful coupling event} $\Ccal_n$ to be
	\eqn{
	\label{Cevent-n-def-CM}
	\Ccal_n=\Ccal_n(1)\cap\Ccal_n(2),\qquad \text{so that} \qquad \prob(\Ccal_n)=1-o(1).
	}

\paragraph{\bf Branching process neighbourhood growth.}
The previous step relates the graph exploration process to two independent $n$-dependent unimodular branching processes $(\BP_k^{\sss(1)}, \BP_k^{\sss(2)})_{k\geq 1}$. In this step, we investigate the growth of these branching processes. Fix $r$, and denote $b_0^{\sss(i)}=|\BP_r^{\sss(i)}|$, which we assume to be at least $r$ (and which is true on the event $\Ccal_n\cap \{|\partial B_r^{\sss(G_n)}(\Ver_1)|, |\partial B_r^{\sss(G_n)}(\Ver_2)|\geq r\}$). 

Let $\nu_n=\frac{1}{\ell_n}\sum_{v\in [n]} d_v(d_v-1)$ denote the expected forward degree of a uniform half-edge in $\CMnd$, which also equals the expected offspring of the branching processes $(\BP_k^{\sss(i)})_{k\geq 0}$. Define
	\eqn{
	\label{Devent-n-def-CM}
	\Dcal_n=\big\{\underline{b}_k^{\sss(i)}\leq |\BP_{r+k}^{\sss(i)}| \leq \overline{b}_k^{\sss(i)}~\forall i\in [2], ~k\geq 0\big\},
	}
where the lower- and upper-bounding sequences $(\underline{b}_k^{\sss(i)})_{k\geq 0}$ and $(\overline{b}_k^{\sss(i)})_{k\geq 0}$ satisfy the recursions $\underline{b}_0^{\sss(i)}=\overline{b}_0^{\sss(i)}=b_0^{\sss(i)}$, while, for some $\alpha\in (\tfrac{1}{2},1)$,
	\eqn{
	\label{recursions-b}
	\underline{b}_{k+1}^{\sss(i)}=\nu_n\underline{b}_{k}^{\sss(i)}-(\overline{b}_{k}^{\sss(i)})^{\alpha},
	\qquad
	\overline{b}_{k+1}^{\sss(i)}=\nu_n\overline{b}_{k}^{\sss(i)}+(\overline{b}_{k}^{\sss(i)})^{\alpha}.
	}
The following lemma investigates the asymptotics of $(\underline{b}_k^{\sss(i)})_{k\geq 1}$ and $(\overline{b}_k^{\sss(i)})_{k\geq 1}$:

\begin{lemma}[Asymptotics of 	$\underline{b}_k^{\sss(i)}$ and $\overline{b}_k^{\sss(i)}$]
\label{lem-asy-bar-bns-CM}
Assume that $\lim_{n\rightarrow \infty} \nu_n=\nu>1$, and assume that $b_0^{\sss(i)}\geq r$. Then, there exists an $A=A_r>1$ with $A_r<\infty$ for $r$ large enough, such that, for all $k\geq 0$, the solutions to the recursion relations in \eqref{recursions-b} satisfy 
	\eqn{
	\overline{b}_k^{\sss(i)}\leq Ab_0^{\sss(i)}\nu_n^k,
	\qquad
	\underline{b}_k^{\sss(i)}\geq b_0^{\sss(i)}\nu_n^k/A.
	}
\end{lemma}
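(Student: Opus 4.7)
The plan is to normalise both sequences by the expected growth factor $(\nu_n')^k$ and to treat the $(\bar{b}_k^{\sss(i)})^\alpha$ term as a summable perturbation, using $\alpha<1$ and $\nu_n'>1$ uniformly in $n$ for large $n$. I will handle the upper bound first and then feed it into the lower-bound recursion.

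Setting $c_k := \bar{b}_k^{\sss(i)}/(\nu_n')^k$ turns the upper recursion into $c_{k+1} = c_k + c_k^\alpha (\nu_n')^{-(1+k(1-\alpha))}$. Since $(c_k)$ is non-decreasing, $c_k \geq c_0 = b_0^{\sss(i)} \geq r$, so $c_k^{\alpha-1} \leq r^{\alpha-1}$ (as $\alpha-1<0$). Taking logarithms of $c_{k+1}/c_k = 1+c_k^{\alpha-1}(\nu_n')^{-(1+k(1-\alpha))}$, using $\log(1+x)\leq x$, and telescoping then reduces the task to bounding the geometric series $\sum_{j\geq 0}(\nu_n')^{-(1+j(1-\alpha))}$, which is uniformly bounded by some $K_\nu<\infty$ because $\nu_n'\to\nu>1$ forces $(\nu_n')^{-(1-\alpha)}$ to be strictly below $1$ for all large $n$. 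This yields $c_k \leq b_0^{\sss(i)}\exp(r^{\alpha-1}K_\nu)$, which is the desired upper bound with $A_r^{\sss(1)} := \exp(r^{\alpha-1}K_\nu)$.

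For the lower bound, setting $d_k := \underline{b}_k^{\sss(i)}/(\nu_n')^k$ gives $d_{k+1} = d_k - (\bar{b}_k^{\sss(i)})^\alpha/(\nu_n')^{k+1}$. Substituting the upper bound just obtained and telescoping yields
\[
d_k \geq b_0^{\sss(i)}\bigl(1 - (A_r^{\sss(1)})^\alpha r^{\alpha-1} K_\nu\bigr),
\]
using $(b_0^{\sss(i)})^{\alpha-1}\leq r^{\alpha-1}$. Since $\alpha<1$, the correction $r^{\alpha-1}$ vanishes as $r\to\infty$, so for $r$ above a threshold determined by $\nu$ and $\alpha$ the bracket exceeds $1/2$. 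Choosing $A_r := \max\{A_r^{\sss(1)},2\}$ then delivers $d_k \geq b_0^{\sss(i)}/A_r$ and completes both bounds with a common constant.

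The main obstacle is producing one $A_r$ that serves both inequalities uniformly in $n$. The uniformity in $n$ comes from $\nu_n'\to\nu>1$, which keeps $(\nu_n')^{-(1-\alpha)}$ strictly below $1$ for all large $n$ and makes $K_\nu$ finite and $n$-independent. The condition $\alpha<1$ is what tames the perturbation: the factor $r^{\alpha-1}$ decays as $r$ grows, letting the same $A_r$ absorb the upward correction in the first step and the downward correction in the second. Since in the intended application $r\to\infty$, the need for $r$ to be above a fixed threshold depending only on $\nu$ and $\alpha$ is harmless.
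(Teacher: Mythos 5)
Your proof is correct and takes essentially the same route as the paper's: both first establish the upper bound on $\bar b_k^{\sss(i)}$ via a bounded multiplicative perturbation of the pure geometric growth (the paper uses the infinite product $\prod_k(1+r^{-(1-\alpha)}(\nu_n')^{-(1-\alpha)k})$, you use the equivalent $\exp\big(r^{\alpha-1}\sum_k(\nu_n')^{-(1+k(1-\alpha))}\big)$), and both then feed that upper bound into the $\underline b$ recursion as a subtracted, summable error, with the same observation that the correction is $O(r^{\alpha-1})$ and hence harmless once $r$ is large. Your normalised-sequence bookkeeping ($c_k,d_k$) with direct telescoping is a cleaner presentation of what the paper does via its auxiliary sequence $a_k$, but there is no substantive difference in the argument.
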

\proof First, obviously, $\overline{b}_k^{\sss(i)}\geq b_0^{\sss(i)}\nu_n^k$. Thus, since $\alpha<1$ and also using that $\nu_n>1$ for $n$ large enough,
	\eqan{
	\overline{b}_{k+1}^{\sss(i)}&=\nu_n\overline{b}_{k}^{\sss(i)}+(\overline{b}_{k}^{\sss(i)})^{\alpha}
	=\nu_n\overline{b}_{k}^{\sss(i)}(1+(\overline{b}_{k}^{\sss(i)})^{\alpha-1})\leq \nu_n\overline{b}_{k}^{\sss(i)}\big(1+r^{-(1-\alpha)}\nu_n^{-(1-\alpha)k}\big),
	}
By iteration, this implies the upper bound with $A$ replaced by $\bar{A}_r$ given by
	\eqn{
	\bar{A}_r=\prod_{k\geq 0} \big(1+r^{-(1-\alpha)}\nu_n^{-(1-\alpha)k}\big)<\infty.
	}
Further, note that $\lim_{r\rightarrow \infty} \limsup_{n\rightarrow \infty} \bar{A}_r=1$. For the lower bound, we use that $\overline{b}_k^{\sss(i)}\leq \bar{A}_rb_0^{\sss(i)}\nu_n^k$ to obtain
	\eqn{
	\underline{b}_{k+1}^{\sss(i)}\geq \nu_n\underline{b}_{k}^{\sss(i)}-\bar{A}_r^{\alpha}(b_0^{\sss(i)})^\alpha \nu_n^{\alpha k}.
	}
We now use induction to show that 
	\eqn{
	\underline{b}_{k}^{\sss(i)}\geq a_{k} b_0^{\sss(i)} \nu_n^k,
	}
where $a_0=1$ and 
	\eqn{
	\label{ak-rec-CM-prime}
	a_{k+1}=a_k -\bar{A}_r^{\alpha}r^{1-\alpha}\nu_n^{(\alpha-1) k-1}.
	}
The initialization follows, since $\underline{b}_{0}^{\sss(i)}=b_0^{\sss(i)}$ and $a_0=1$. To advance the induction hypothesis, we substitute the induction hypothesis to obtain that
	\eqan{
	\underline{b}_{k+1}^{\sss(i)}&\geq a_k b_0^{\sss(i)} \nu_n^{k+1}-\bar{A}_r^{\alpha}(b_0^{\sss(i)})^\alpha \nu_n^{\alpha k}\\
	&=b_0^{\sss(i)} \nu_n^{k+1} \big(a_k-\bar{A}_r^{\alpha}(b_0^{\sss(i)})^{\alpha-1} \nu_n^{(\alpha-1) k-1}\big)\nn\\
	&\geq b_0^{\sss(i)} \nu_n^{k+1} \big(a_k-\bar{A}_r^{\alpha}r^{\alpha-1} \nu_n^{(\alpha-1) k-1}\big)=a_{k+1}b_0^{\sss(i)} \nu_n^{k+1},\nn
	}
by \eqref{ak-rec-CM-prime}. Finally, $a_k$ is decreasing, and thus $a_k \searrow a\equiv 1/\underline{A}_r,$ where 
	\[
	\underline{A}_r^{-1}=1-\sum_{k\geq 0}\bar{A}_r^{\alpha}r^{-(1-\alpha)}\nu_n^{-(1-\alpha)k}<\infty
	\]
for $r$ large enough, so that the claim follows with $A=A_r=\max\{\bar{A}_r, \underline{A}_r\}$. Note further that $\lim_{r\rightarrow \infty} \limsup_{n\rightarrow \infty} \underline{A}_r=1$, since $\lim_{r\rightarrow \infty} \limsup_{n\rightarrow \infty} \bar{A}_r=1$.
\qed
\medskip

The following lemma shows that $\Dcal_n=\big\{\underline{b}_k^{\sss(i)}\leq |\BP_{r+k}^{\sss(i)}| \leq \overline{b}_k^{\sss(i)}~\forall i\in [2], ~k\geq 0\big\}$ occurs with probability close to 1 when first $n\rightarrow \infty$ followed by $r\rightarrow \infty$:

\begin{lemma}[$\Dcal_n$ occurs whp]
\label{lem-Devent-n-whp}
	\eqn{
	\lim_{r\rightarrow \infty}\liminf_{n\rightarrow \infty} \prob(\Dcal_n\mid |\BP_r^{\sss(i)}|\geq r)=1.
	}

\end{lemma}

\proof We will show that $\lim_{r\rightarrow \infty}\limsup_{n\rightarrow \infty}\prob(\Dcal_n^c)=0$. The inequalities $\underline{b}_0^{\sss(i)}\leq |\BP_{r}^{\sss(i)}| \leq \overline{b}_0^{\sss(i)}$ hold by definition. We thus write
	\eqn{
	\prob(\Dcal_n^c)\leq \sum_{k=1}^{\infty}\prob(\Dcal_{n,k}^c\cap\Dcal_{n,k-1}),
	}
where 
	\eqn{
	\Dcal_{n,k}=\big\{\underline{b}_k^{\sss(i)}\leq |\BP_{r+k}^{\sss(i)}|\leq \overline{b}_k^{\sss(i)}~\forall i\in [2]\big\}.
	}
Note that, when $|\BP_{r+k}^{\sss(i)}|>\overline{b}_k^{\sss(i)}$ and $|\BP_{r+k-1}^{\sss(i)}|\leq \overline{b}_{k-1}^{\sss(i)}$, by \eqref{recursions-b},
	\eqn{
	|\BP_{r+k}^{\sss(i)}|-\nu_n |\BP_{r+k-1}^{\sss(i)}|>\overline{b}_k^{\sss(i)}-\nu_n\overline{b}_{k-1}^{\sss(i)}=(\overline{b}_{k-1}^{\sss(i)})^{\alpha},
	}
while, when $|\BP_{r+k}^{\sss(i)}|<\underline{b}_k^{\sss(i)}$ and $|\BP_{r+k-1}^{\sss(i)}|\geq \underline{b}_{k-1}^{\sss(i)}$, again by \eqref{recursions-b},
	\eqn{
	|\BP_{r+k}^{\sss(i)}|-\nu_n|\BP_{r+k-1}^{\sss(i)}|<\underline{b}_k^{\sss(i)}-\nu_n\underline{b}_{k-1}^{\sss(i)}=(\overline{b}_{k-1}^{\sss(i)})^{\alpha},
	}
Thus, 
	\eqan{
	&\Dcal_{n,k}^c\cap\Dcal_{n,k-1}\\
	&\quad\subseteq\big\{\big||\BP_{r+k}^{\sss(1)}|-\nu_n|\BP_{r+k-1}^{\sss(1)}|\big|\geq (\overline{b}_{k-1}^{\sss(1)})^{\alpha}\big\}\cup
	\big\{\big||\BP_{r+k}^{\sss(2)}|-\nu_n|\BP_{r+k-1}^{\sss(2)}|\big|\geq (\overline{b}_{k-1}^{\sss(2)})^{\alpha}\big\}.\nn
	}
By the Chebychev inequality, conditionally on $\Dcal_{n,k-1}$,
	\eqan{
	&\prob\big(\big||\BP_{r+k}^{\sss(i)}|-\nu_n|\BP_{r+k-1}^{\sss(i)}|\big|\geq (\overline{b}_{k-1}^{\sss(i)})^{\alpha}\mid \Dcal_{n,k-1}\big)\\
	&\qquad =\expec\Big[\prob\Big(\big||\BP_{r+k}^{\sss(i)}|-\nu_n|\BP_{r+k-1}^{\sss(i)}|\big|\geq (\overline{b}_{k-1}^{\sss(i)})^{\alpha}~\Big|~ |\BP_{r+k-1}^{\sss(i)}|, \Dcal_{n,k-1}\Big)~\Big|~ \Dcal_{n,k-1}\Big]\nn\\
	&\qquad \leq \frac{\expec\Big[\Var(|\BP_{r+k}^{\sss(i)}|\mid |\BP_{r+k-1}^{\sss(i)}|, \Dcal_{n,k-1})\mid \Dcal_{n,k-1}\Big]}{(\overline{b}_{k-1}^{\sss(i)})^{2\alpha}}\leq \frac{\sigma_n^2\expec[|\BP_{r+k-1}^{\sss(i)}|\mid \Dcal_{n,k-1}]}{(\overline{b}_{k-1}^{\sss(i)})^{2\alpha}}
	\leq \sigma_n^2(\overline{b}_{k-1}^{\sss(i)})^{1-2\alpha},\nn
	}
where $\sigma_n^2$ is the variance of the offspring distribution given by
	\eqn{
	\sigma_n^2=\frac{1}{\ell_n}\sum_{v\in [n]}d_v(d_v-1)^2-\nu_n^2,
	}
which is uniformly bounded when all degrees are bounded by $b$. Thus, by the union bound for $i\in \{1,2\}$,
	\eqn{
	\prob(\Dcal_{n,k}^c\cap\Dcal_{n,k-1})\leq 2\sigma_n^2(\overline{b}_{k-1}^{\sss(i)})^{1-2\alpha},
	}
and we conclude that
	\eqn{
	\prob(\Dcal_n^c)\leq 2\sigma_n^2\sum_{k=1}^{\infty}\big((\overline{b}_{k-1}^{\sss(1)})^{1-2\alpha}+(\overline{b}_{k-1}^{\sss(2)})^{1-2\alpha}\big).
	}
The claim now follows from Lemma \ref{lem-asy-bar-bns-CM} and the fact that $\sigma_n^2\leq b(b-1)^2$ remains uniformly bounded.
\qed
	

\paragraph{\bf Completion of the proof of Theorem \ref{thm-convtwolargestcomps} for bounded degrees.}
Recall \eqref{aim-PT-CM}. Also recall the definition of $\Ccal_n$ in \eqref{Cevent-n-def-CM}, \eqref{Cevent1-n-def-CM} and \eqref{Cevent2-n-def-CM}, and that of $\Dcal_n$ in \eqref{Devent-n-def-CM}. Let $\Gcal_n=\Ccal_n\cap \Dcal_n$ be the good event. By \eqref{Cevent-n-def-CM} and Lemma \ref{lem-Devent-n-whp},
	\eqn{
	\label{aim-PT-CM-a}
	\lim_{r\rightarrow \infty} \limsup_{n\rightarrow \infty} \prob(|\partial B_r^{\sss(G_n)}(\Ver_1)|, |\partial B_r^{\sss(G_n)}(\Ver_2)|\geq r, \Ver_1\nc \Ver_2; \Gcal_n^c)=0,
	}
so that it suffices to investigate $\prob(|\partial B_r^{\sss(G_n)}(\Ver_1)|, |\partial B_r^{\sss(G_n)}(\Ver_2)|\geq r, \Ver_1\nc \Ver_2; \Gcal_n)$. On $\Gcal_n$ (recall \eqref{Cevent2-n-def-CM}),
	\eqn{
	|\partial B_{\bar{k}_n}^{\sss(G_n)}(\Ver_2)|-|\BP_{\bar{k}_n}^{\sss(2)}|\geq -(\overline{m}_n^2/\ell_n)^{1+\delta}.
	}
Further, on $\Gcal_n$ (recall \eqref{Cevent1-n-def-CM}),
	\eqn{
	|\partial B_{\underline{k}_n}^{\sss(G_n)}(\Ver_1)|-|\BP_{\underline{k}_n}^{\sss(1)}|=0.
	}
On the event $\{\Ver_1\nc \Ver_2\}$, we must have that $\partial B_{\underline{k}_n}^{\sss(G_n)}(\Ver_1)\cap \partial B_{\bar{k}_n}^{\sss(G_n)}(\Ver_2)=\varnothing$. By Lemma \ref{lem-asy-bar-bns-CM}, on $\Gcal_n$ and when $\overline{m}_n^2/\ell_n\rightarrow \infty$ sufficiently slowly, $|\partial B_{\underline{k}_n}^{\sss(G_n)}(\Ver_1)|=\Theta(\underline{m}_n)$ and $|\partial B_{\bar{k}_n}^{\sss(G_n)}(\Ver_2)|=\Theta(\overline{m}_n)$.
The same bounds hold for the number of half-edges $Z_{\underline{k}_n}^{\sss(1)}$ and $Z_{\overline{k}_n}^{\sss(2)}$ incident to $\partial B_{\underline{k}_n}^{\sss(G_n)}(\Ver_1)$ and $\partial B_{\overline{k}_n}^{\sss(G_n)}(\Ver_2)$, respectively, since $Z_{\underline{k}_n}^{\sss(1)}\geq |\partial B_{\underline{k}_n+1}^{\sss(G_n)}(\Ver_1)|$ and $Z_{\overline{k}_n}^{\sss(2)}\geq |\partial B_{\overline{k}_n+1}^{\sss(G_n)}(\Ver_2)|$, so that, on $\Gcal_n$, also $Z_{\underline{k}_n}^{\sss(1)}=\Theta_{\sss \prob}(\underline{m}_n)$ and $Z_{\overline{k}_n}^{\sss(2)}=\Theta_{\sss \prob}(\overline{m}_n)$.

Conditionally on having paired some half-edges incident to $\partial B_{\underline{k}_n}^{\sss(G_n)}(\Ver_1)$, conditionally on none of them being paired to half-edges incident to $\partial B_{\overline{k}_n}^{\sss(G_n)}(\Ver_2)$, each further such half-edge has probability at least $1-Z_{\overline{k}_n}^{\sss(2)}/\ell_n$ to be paired to a half-edge incident to $\partial B_{\overline{k}_n}^{\sss(G_n)}(\Ver_2)$, thus creating a path between $\Ver_1$ and $\Ver_2$. The latter conditional probability is {\em independent} of the pairing of the earlier half-edges. Thus, the probability that $\partial B_{\underline{k}_n}^{\sss(G_n)}(\Ver_1)$ is not {\em directly} connected to $\partial B_{\overline{k}_n}^{\sss(G_n)}(\Ver_2)$ is at most
	\eqn{
	\Big(1-\frac{Z_{\underline{k}_n}^{\sss(1)}}{\ell_n}\Big)^{Z_{\overline{k}_n}^{\sss(2)}/2},
	}
since at least $Z_{\overline{k}_n}^{\sss(2)}/2$ pairings need to be performed. Since $Z_{\underline{k}_n}^{\sss(1)}=\Theta(\underline{m}_n)$ and $Z_{\overline{k}_n}^{\sss(2)}=\Theta(\underline{m}_n)$, this probability vanishes when $\underline{m}_n \overline{m}_n\gg n$. As a result, as $n\rightarrow \infty$,
	\eqn{
	\prob(|\partial B_r^{\sss(G_n)}(\Ver_1)|, |\partial B_r^{\sss(G_n)}(\Ver_2)|\geq r, \Ver_1\nc \Ver_2; \Gcal_n)=o(1),
	}
as required. This completes the proof of \eqref{almost-local-cond} for $\CMnd$ with bounded degrees, and even shows that $\gdist{\CMnd}(\Ver_1,\Ver_2)\leq 2r+\overline{k}_n+\underline{k}_n+1$ whp on the event that $|\partial B_r^{\sss(G_n)}(\Ver_1)|, |\partial B_r^{\sss(G_n)}(\Ver_2)|\geq r$, where $\gdist{\CMnd}(u,v)$ denotes the graph distance in $\CMnd$ between vertices $u,v\in[n]$.
\qed

\paragraph{\bf Extension of the proof to unbounded degrees.}
To extend the proof to configuration models whose degrees satisfy Conditions \ref{cond-degrees-regcondII}(a)-(b), we apply a {\em degree-truncation technique} that allows us to go from such a configuration model to one whose degrees are uniformly bounded. This result makes the proof of the `giant is almost local' condition in \eqref{almost-local-cond-rep} simpler, and is interesting in its own right:

\begin{theorem}[Degree truncation for configuration models]
\label{thm-degree-truncation-CM}
Consider $\CMnd$. Fix $b\geq 1$. There exists a related configuration model $\CMndprime$ that is coupled to $\CMnd$ and satisfies that
\begin{itemize}

\item[(a)] the degrees in $\CMndprime$ are truncated versions of those in $\CMnd$, i.e., $d_v'=(d_v\wedge b)$ for $v\in[n]$, and $d_v'=1$ for $v\in[n']\setminus [n]$;

\item[(b)] the total degree in $\CMndprime$  is the same as that in $\CMnd$, i.e., $\sum_{v\in[n']}d_v'=\sum_{v\in [n]} d_v$;

\item[(c)] for all $u,v\in [n]$, if $u$ and $v$ are connected in $\CMndprime$, then so are $u$ and $v$ in $\CMnd$.
 
\end{itemize}
\end{theorem}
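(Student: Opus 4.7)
The plan is to construct $\CMndprime$ from $\CMnd$ by a ``half-edge relabelling.'' Concretely, I would first pair the half-edges uniformly at random to form $\CMnd$ as usual, and then, for each $v\in[n]$ with $d_v > b$, keep $b$ of $v$'s half-edges attached to $v$ while reattaching each of the remaining $d_v - b$ half-edges to a separate new phantom vertex, indexed in $[n']\setminus[n]$. Using the \emph{same} random matching to interpret both graphs defines the coupling. Item (a) is immediate since $d'_v = d_v \wedge b$ for $v\in[n]$ and $d'_v = 1$ for each phantom, and (b) holds because for each $v\in[n]$ with $d_v > b$ the $b$ retained half-edges together with the $d_v - b$ new degree-one phantoms contribute a total of $d_v$, matching $v$'s contribution to $\ell_n$ in $\CMnd$; for $v$ with $d_v \leq b$ nothing changes. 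Since a uniformly random pairing of the $\ell_n$ half-edges is unchanged in distribution when one relabels which vertex each half-edge is attached to, $\CMndprime$ is a bona fide configuration model with degree sequence $\bfdit'$.

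For (c), the key observation is that the set of half-edges attached to $v\in[n]$ in $\CMndprime$ is a subset of those attached to $v$ in $\CMnd$. Hence, under the common pairing, any edge of $\CMndprime$ whose endpoints both lie in $[n]$ arises from a pair of half-edges that in $\CMnd$ are attached to the same two vertices, so it is an edge of $\CMnd$ as well. Moreover, each phantom vertex has degree one in $\CMndprime$ and is therefore a leaf, which cannot appear as an interior vertex of any path. A path in $\CMndprime$ from $u$ to $v$ with $u,v\in[n]$ thus uses only vertices in $[n]$ and only edges present in $\CMnd$, yielding a connecting path in $\CMnd$.

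The argument is essentially bookkeeping once the correct coupling is identified, and I do not expect a substantive mathematical obstacle. The one point requiring mild care is to verify that relabelling half-edges indeed produces the uniform distribution on pairings for the new degree sequence $\bfdit'$, which follows from the exchangeability of the half-edge pairing \citeI{Definition 7.5 and Lemma 7.6}. Combined with \eqref{vep-def-truncation-degrees}, the construction yields a graph whose degrees on $[n]$ are bounded by $b$ and whose at most $\vep n$ phantoms have degree one, while (c) ensures that any connectivity in $\CMndprime$ is inherited by $\CMnd$. This is precisely what is needed to justify the comparison $|\Cmax'\Delta\Cmax|\leq \vep n$ invoked later in the proof of Theorem \ref{thm-convtwolargestcomps}.
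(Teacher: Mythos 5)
Your construction is essentially identical to the paper's: both fragment (``explode'') each vertex $v$ with $d_v>b$ into a retained degree-$b$ vertex plus $d_v-b$ degree-one phantoms, run the same uniform half-edge pairing, and prove (c) by observing that phantoms are leaves and hence cannot be interior vertices of any path between $u,v\in[n]$. The only superficial difference is that you present the coupling as ``pair first, then relabel'' while the paper labels and fragments first and then pairs; by exchangeability of the half-edge matching these are the same construction, and your justification of this point is correct.
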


The proof of Theorem \ref{thm-degree-truncation-CM} is in Appendix \ref{app-degree-truncation}. 

\paragraph{Completion of the proof of Theorem \ref{thm-convtwolargestcomps} for unbounded degrees.} We use Theorem \ref{thm-degree-truncation-CM} with $b$ large. By Conditions \ref{cond-degrees-regcondII}(a)-(b), we can choose $b=b(\vep)$ so large that
	\eqn{
	\label{vep-def-truncation-degrees}
	\prob(D_n^\star >b)=\frac{1}{\ell_n} \sum_{v\in [n]} d_v \indic{d_v>b}\leq \frac{\vep n}{\ell_n}.
	}
This implies that $\CMndprime$ has at most $(1+\vep)n$ vertices, and the (at most $\vep n$) vertices in $[n']\setminus [n]$ all have degree 1, while the vertices in $[n]$ have degree $d_v'\leq b$. As a result, it suffices to prove Theorem \ref{thm-convtwolargestcomps}(a) for $\CMndprime$ instead, and, in the remainder of this proof, we assume that $\dmax'\leq b$ is uniformly bounded. We thus apply Theorem \ref{thm-giant-conv-LWC-structure} to $\CMndprime$. 
We denote the parameters of $\CMndprime$ by $(p_k')_{k\geq 1}, \xi'$ and $\zeta'$, respectively, and note that, for $\vep$ as in \eqref{vep-def-truncation-degrees}, when $\vep\searrow 0$,
	\eqn{
	p_k'\rightarrow p_k,\qquad
	\xi'\rightarrow \xi,
	\qquad
	\zeta'\rightarrow \zeta,
	}
Further, with $\Cmax'$ the largest connected component in $\CMndprime$, by \eqref{vep-def-truncation-degrees},
	\eqn{
	|\Cmax'|\leq |\Cmax|+\vep n,
	}
so that
	\eqn{
	\prob(|\Cmax|\geq n(\zeta'-2\vep))\rightarrow 1,
	}
since $|\Cmax'|/n\convp \zeta'$. This proves the required lower bound on $|\Cmax|$, while the upper bound follows from Corollary \ref{cor-giant-UB}.\qed

\subsection{Small-world nature of the configuration model}

In this section, we use the above proof to study the {\em typical distances} in $\CMnd$. Such distances have attracted considerable attention, see e.g., \cite{DerMonMor12, DerMonMor17, HofHooVan05a, HofHooZna04a, HofKom17}. Here, we partially reprove a result from \cite{HofHooVan05a}:
\label{sec-small-world}
\begin{theorem}[Typical distances in $\CMnd$ for finite-variance degrees]
\label{thm-averdistCM>3}
Consider the configuration model $\CMnd$ subject to Conditions \ref{cond-degrees-regcondII}(a)-(b) and where $\expec[D_n^2]\rightarrow \expec[D^2]<\infty$ with
	$
	\nu\in (1,\infty).
	$
Then, conditionally on $\gdist{\CMnd}(\Ver_1,\Ver_2)<\infty$,
    \eqn{
    \label{Hn-conv-CM>3}
    \gdist{\CMnd}(\Ver_1,\Ver_2)/\log{n}\convp 1/\log{\nu}.
    }
\end{theorem}


\proof Recall that we condition on $\gdist{\CMnd}(\Ver_1,\Ver_2)<\infty$. Since $|\cluster_{\sss(2)}|/n\convp 0$, the majority of pairs of vertices $u,v$ satisfying $\gdist{\CMnd}(\Ver_1,\Ver_2)<\infty$ satisfy that $u,v\in \Cmax$. Thus, from now on, we will assume that $\Ver_1,\Ver_2\in \Cmax$, and, particularly, their component sizes are large.

\paragraph{Proof of upper bound in Theorem \ref{thm-averdistCM>3} for bounded degrees.} We again start by proving the result for bounded degrees. The proof in \eqref{aim-PT-CM-a} shows that for pairs of vertices with large connected components, whp as first $n\rightarrow \infty$ followed by $r\rightarrow \infty$,
	\eqn{
	\gdist{\CMnd}(\Ver_1,\Ver_2)\leq \underline{k}_n+\bar{k}_n+1=\frac{\log{n}}{\log{\nu_n}}(1+\op(1)).
	} 
Indeed, Lemma \ref{lem-asy-bar-bns-CM} implies that $\underline{k}_n=(1+\op(1))\log{\underline{m}_n}/\log{\nu_n}$ and $\bar{k}_n=(1+\op(1))\log{\overline{m}_n}/{\log{\nu_n}}$ on the good event $\Gcal_n=\Ccal_n\cap \Dcal_n$, so that 
	\eqn{
	\underline{k}_n+\bar{k}_n+1=(1+\op(1))\log{(\underline{m}_n\overline{m}_n)}/{\log{\nu_n}}=(1+\op(1))\log{n}/{\log{\nu_n}}
	}
when $\underline{m}_n\overline{m}_n/n\rightarrow \infty$ slowly enough. Further, $\nu_n\rightarrow \nu$, so that, for every $\vep>0$,
	\eqn{
	\prob(\gdist{\CMnd}(\Ver_1,\Ver_2)/\log{n}\leq (1+\vep)/\log{\nu}\mid \gdist{\CMnd}(\Ver_1,\Ver_2)<\infty)=1-o(1).
	}
This proves the upper bound on $\gdist{\CMnd}(\Ver_1,\Ver_2)$ for uniformly bounded degrees.

\paragraph{Proof of of upper bound in Theorem \ref{thm-averdistCM>3} for unbounded degrees.} We next extend this argument to degrees under Conditions \ref{cond-degrees-regcondII}(a)-(b) and when $\expec[D_n^2]\rightarrow \expec[D^2]<\infty.$ We again rely on the degree-truncation argument in Theorem \ref{thm-degree-truncation-CM}, and let $\CMndprime$ be the graph obtained in Theorem \ref{thm-degree-truncation-CM}. Recall from \eqref{vep-def-truncation-degrees} that $\CMndprime$ has at most $(1+\vep)n$ vertices, and the (at most $\vep n$) vertices in $[n']\setminus [n]$ all have degree 1, while the vertices in $[n]$ have degree $d_v'\leq b$. 
\smallskip

Let $\Cmax'$ be the giant in $\CMndprime$, and $\Cmax$ that in $\CMnd$. By Theorem \ref{thm-degree-truncation-CM}, pairs $u,v$ of vertices in $[n]$ that are connected in $\CMndprime$ are also connected in $\CMnd$. Thus, all vertices in $\Cmax'\cap [n]$ are also connected in $\CMnd$, and therefore are also part of $\Cmax$. As a result, differences in $|\Cmax'|$ and $|\Cmax|$ only arise through the addition (at most $\vep n$) degree-one vertices in $[n']\setminus [n]$. We conclude that the giants $\Cmax'$ in $\CMndprime$ and $\Cmax$ in $\CMnd$ satisfy that $|\Cmax'\Delta \Cmax|\leq \vep n$ whp for $b$ sufficiently large, where $\Delta$ denotes the symmetric difference between sets. Thus, whp, a pair of vertices in $\Cmax$ is also in $\Cmax'$. 
\smallskip

Finally, let $\nu_n'=\sum_{v\in [n']}d_v'(d_v'-1)/\sum_{v\in [n']}d_v'$ denote the expected forward degree of a half-edge for $\CMndprime$. When Conditions \ref{cond-degrees-regcondII}(a)-(b) hold and $\expec[D_n^2]\rightarrow \expec[D^2]<\infty$, also $\nu_n'\rightarrow \nu'=\nu'(\vep)$, where $\nu'(\vep)\rightarrow \nu$ when $\vep\searrow 0$. We conclude that the upper bound on $\gdist{\CMnd}(\Ver_1,\Ver_2)$ for unbounded degrees under Conditions \ref{cond-degrees-regcondII}(a)-(b) follows from that for uniformly bounded degrees.

\paragraph{Proof of of lower bound in Theorem \ref{thm-averdistCM>3}} The lower bound on $\gdist{\CMnd}(\Ver_1,\Ver_2)$ applies more generally, and follows from the fact that
	\eqn{
	\prob(\gdist{\CMnd}(\Ver_1,\Ver_2)\leq k)=\expec[|B^{\sss(G_n)}_k(\vertex_1)|/n],
	}
together with the fact that, for $k\geq 1$,
	\eqn{
	\expec[|\partial B^{\sss(G_n)}_k(\vertex_1)|]\leq \frac{\ell_n}{n}\nu_n^{k-1}
	}
by \cite[Lemma 5.1]{Jans09b}. Thus, with $k_n=\lfloor (1-\vep)\log{n}/{\log{\nu_n}}\rfloor$,
	\eqn{
	\prob(\gdist{\CMnd}(\Ver_1,\Ver_2)\leq (1-\vep)\log{n}/{\log{\nu_n}})\leq \frac{1}{n}+\sum_{k=1}^{k_n} \frac{\ell_n}{n}\nu_n^{k-1}
	\leq  \frac{1}{n} +\frac{\ell_n}{n} \frac{\nu_n^{k_n}-1}{\nu_n-1}=o(1),
	}
as required.
\qed

\section{Discussion and open problems}
\label{sec-disc-open-problems}
In this section, we discuss the history of our paper, our proof and results, and state some open problems.

\paragraph{History of this paper.} This paper first appeared on the ar{X}iv on March 23, 2021, and a revised version on June 12, 2023. Since then, it has been incorporated into the book \cite{Hofs24}, appearing in 2024; see \cite[Section 2.6]{Hofs24}, where the main results on the `almost local' nature of the giant have been recorded, and \cite[Chapter 4]{Hofs24}, where its consequences for the configuration model were described. 

The way the research community perceives new proofs in books is not consistent. Some researchers rightfully argue that books are not reviewed in the same way as papers are, while others appreciate new and elegant proofs in books. Since this paper is now drawing ample citations (e.g., 20 on Google Scholar on September 29, 2025), and a follow-up paper extending some results to directed random graphs have already appeared \cite{HofPan25}, I personally believe that it serves the community to publish this paper in the refereed literature.  

We have also made changes to this paper compared to the versions in \cite{Hofs24}. Indeed, the proof of necessity of the `giant-is-almost-local' condition is given in Remark \ref{rem-nec-suf}. Further, the current proof of Lemma \ref{lem-almost-local-cond-rep} is slightly simplified, in that only 1 condition remains in \eqref{error-terms-simplification}. Several typos have been corrected. Corollary \ref{cor-LLN-outside} has been added (replacing an erroneous version  based on bounds on the second largest component in the 2023 version), and its interesting consequences based on \cite{JorKomMit25} are explained in greater detail. Lemma \ref{lem-coupling-CM} has been made more precise. Finally, we corrected some typos in \cite[Theorem 9.38]{Hofs24}; see Theorem \ref{thm-giant-GIRG}.

\paragraph{\bf Minimal conditions for the proof.} Inspection of the proof of Corollary \ref{cor-giant-UB} and Theorem \ref{thm-giant-conv-LWC} shows that $Z_{\sss \geq k}/n\convp \zeta_{\sss\geq k}$, with $\zeta_{\sss\geq k}\searrow \zeta$, combined with \eqref{almost-local-cond}, suffices to obtain Theorem \ref{thm-giant-conv-LWC}. Theorem \ref{thm-giant-conv-LWC-structure} requires a little more (see \eqref{ZAgeqk-conv}), while only Theorem \ref{thm-giant-LWL} requires the full local convergence in probability. The use of $Z_{\sss \geq k}$ to study connected components (particularly close to criticality) has a long history, and is implicitly present in \cite{BorChaKesSpe99}, while being formally introduced in the high-dimensional percolation context in \cite{BorChaHofSlaSpe05a}, see \cite{BorChaHofSlaSpe06, HofNac13, HofNac17} for applications to percolation on the $n$-cube. It was used in \cite{Hofs09a} to study the critical behaviour for rank-1 inhomogeneous random graphs, and \cite{Hofs17} for the phase transition on the \erdos.

\paragraph{\bf Related models.} We use the configuration model as a proof of concept for the method in this paper, and gave several further examples using Corollary \ref{cor-LLN-outside} in Section \ref{sec-second-largest}. This shows that our approach is quite versatile, and here we give some further examples in the literature where the method is used. In \cite[Section 2.5]{Hofs24}, the method is performed for the \erdos. That proof can easily be adapted to finite-type inhomogeneous random graphs as studied in the seminal paper by Bollob\'as, Janson and Riordan \cite{BolJanRio07}, as performed in \cite[Chapters 3 and 6]{Hofs24}. In turn, such an approach also yields the type distribution of the giant, as identified in greater generality in \cite[Lemma 4.10]{JanRio12}.

\paragraph{\bf Relations to the literature.} The weak bound in Corollary \ref{cor-giant-UB} is folklore, and has played a crucial role in many proofs. Often, this part is combined with a {\em sprinkling} or {\em two-round exposure} argument to prove a matching lower bound. In such an argument, one generally proves a lower bound on the number of vertices in large clusters when ignoring a small proportion of the edges. After `sprinkling' these additional edges, one then proves that most of these largish connected components merge to create a giant component. Our proof is somewhat similar, but instead relies on the necessary and sufficient condition in \eqref{almost-local-cond}. It would be interesting to investigate whether \eqref{almost-local-cond} can also be {\em proved} using sprinkling arguments.

\paragraph{\bf Local convergence.} There are many models for which local convergence has been established, and thus our results might be applied. For inhomogeneous random graphs, local convergence was not established explicitly in the seminal paper by Bollob\'as, Janson and Riordan \cite{BolJanRio07}, but the methodology is very related and does allow for a simple proof of it (see \cite[Chapter 3]{Hofs24} for a formal proof, and \cite{DemMon10a} for a proof for \erdos). For the configuration model, and the related uniform random graph with prescribed degrees, it was proved explicitly by Dembo and Montanari \cite{DemMon10a}, see also \cite{Bord16, DemMon10b} as well as \cite[Chapter 4]{Hofs24} for related proofs. Berger, Borgs, Chayes and Saberi \cite{BerBorChaSab14} establish local convergence for preferential attachment models (see also \cite[Chapter 5]{Hofs24} and \cite{GarHazHofRay22} for some extensions). A related preferential attachment model with conditionally independent edges is treated by Dereich and M\"orters \cite{DerMor13}, who again do not formally prove local convergence, but state a highly related result. Random intersection graphs are treated in \cite{Kura15}, see also \cite{HofKomVad18}. In most of these models, the size of the giant is established, though mostly in ways that are quite different from the current proof (see \cite{Hofs24} for an extensive overview). It would be of interest to see whether the current proof simplifies some of these analyses.

\paragraph{\bf Limiting properties that do, or do not, follow from local convergence.}
As stated in the introduction, local convergence is a versatile tool. The number of connected components, the clustering coefficient, local neighbourhoods, and edge-neighbourhoods are all local quantities (see \cite[Chapter 2]{Hofs24} for many examples of local quantities). In some cases, some extra local conditions (typically on the degree distribution) need to be made in order to be able to use such results. 

Some less obvious properties also converge when the graph converges locally. An important and early example is the Ising model partition function on locally tree-like graphs  \cite{DemMon10a}. Also the PageRank distribution is local \cite{GarHofLit20}. A property that is almost local is the density of the densest subgraph in a random graph, as shown by Anantharam and Salez \cite{AnaSal16}. Lyons \cite{Lyon05} shows that the exponential growth rate of the number of spanning trees of a finite connected graph can be computed through the local limit. See also \cite{Sale13b} for weighted spanning subgraphs, \cite{GamNowSwi06} for maximum-weight independent sets, and \cite{BhaEvaSen12} for the limiting spectral distribution of the graph adjacency matrix.

Our approach shows that while the proportion of vertices in the giant is not a local property, it is `almost local' in the sense that one additional necessary and sufficient condition does identify the limiting proportion of vertices in the giant in terms of the local limit. 
It would be interesting to investigate whether the idea of `almost local' properties can be extended to other properties as well. For example, while Theorem \ref{thm-averdistCM>3} suggests that also logarithmic distances are an almost local property, the statement in Theorem \ref{thm-averdistCM>3} follows from the proof of Theorem \ref{thm-convtwolargestcomps}, rather than from a general `almost local' result as in Theorem \ref{thm-giant-conv-LWC}. It would be interesting to explore this further, to see whether also related typical distance results might be proved in a similar way (see \cite{DerMonMor12, DerMonMor17, HofHooVan05a, HofHooZna04a, HofKom17} for such related results).

\paragraph{\bf Percolation on finite graphs.} Another area where our results may be useful is percolation on finite graphs. There, it is often not even clear how to precisely {\em define} the critical value or critical behaviour. We refer to Janson and Warnke \cite{JanWar18} and Nachmias and Peres \cite{NacPer07b} for extensive discussions on the topic. Percolation on random graphs has also attracted substantial attention, see for example Janson \cite{Jans09c} or Fountoulakis \cite{Foun07} for the derivation of the limiting percolation threshold for the configuration model. This is related to the locality of the percolation threshold as investigated by Benjamini, Nachmias and Peres \cite{BenNacPer11} in the context of $d$-regular graphs that have large girth, and inspired by a question by Oded Schramm in 2008. Indeed, Schramm conjectured that the critical percolation threshold on a converging sequence of infinite graphs $G_n$ should converge to that of the graph limit. In a transitive setting, local convergence can be used also on infinite graphs (as every vertex is basically the same, thus skipping the necessity of drawing a root uar). See also \cite{AloBenSta04, BenBouLugRos12, Nach09} for related work on sharp threshold for the existence of a giant in the context of expanders. 

Recently, Alimohammadi, Borgs and Saberi \cite{AliBorSab23} brought the discussion significantly forward, by showing that the percolation critical value is indeed local for {\em expanders}. Further, interestingly, they applied their results to the Barab\'asi-Albert preferential attachment model \cite{BarAlb99}, in the version of Bollob\'as et al.\ in \cite{BolRioSpeTus01}, to show that $p_c=0$ and identify the giant for all $p>0$. The proof relies on a sprinkling argument. It would be interesting to study the relation between our central assumption in \eqref{almost-local-cond} and $(G_n)_{n\geq 1}$ being a sequence of expander graphs. Further, it would be very interesting to see whether local convergence has consequences for the {\em critical} behaviour of percolation on locally convergent graph sequences.


\begin{appendix}
 
\section{Further ingredients for the configuration model}
\label{app-A}
In this section, we provide proofs of technical ingredients used in the proof of Theorem \ref{thm-convtwolargestcomps}. This section is organised as follows. In Appendix \ref{app-LC-CM}, we give some more details on local convergence in probability for the configuration model. In Appendix \ref{app-degree-truncation}, we prove Theorem \ref{thm-degree-truncation-CM} that is used to show that it suffices to prove Theorem \ref{thm-convtwolargestcomps} for {\em uniformly bounded} degrees. In Appendix \ref{app-coupling-B-BP}, we investigate the coupling of the graph exploration process for $\CMnd$ and the $n$-dependent unimodular branching process. 

\subsection{Local convergence of the configuration model}
\label{app-LC-CM}
As mentioned already below Theorem \ref{thm-convtwolargestcomps}, there are several ways in which local convergence in probability, as formalised in \eqref{LCP-def}, can be proved for the configuration model. Such proofs often consist of two steps. In the first, we investigate the expected number of vertices whose $r$-neighbourhood is isomorphic to a specific tree. In the second, we prove that this number is highly concentrated. Convergence of the mean follows from the coupling result in Lemma \ref{lem-coupling-CM} below (see the discussion right below it). Concentration can be proved using martingale techniques (by pairing edges one by one, and using Azuma-Hoefding concentration bounds), see \cite{DemMon10a} or \cite{Bord16}. Alternatively, one can use a second-moment method and results similar to Lemma \ref{lem-coupling-CM}. See \cite[Section 4.1]{Hofs24} for such an approach. We omit further details.

\subsection{A useful degree-truncation argument for the configuration model}
\label{app-degree-truncation}

In this section, we prove Theorem \ref{thm-degree-truncation-CM}. The proof relies on an `explosion' or `fragmentation' of the vertices $[n]$ in $\CMnd$ inspired by \cite{Jans09c}. Label the half-edges from 1 to $\ell_n$ in an arbitrary way. We go through the vertices $v\in[n]$ one by one. When $d_v\leq b$, we do nothing. When $d_v>b$, then we let $d_v'=b$, and we keep the $b$ half-edges with the lowest labels. The remaining $d_v-b$ half-edges are exploded from vertex $v$, in that they are incident to vertices of degree 1 in $\CMndprime$, and are given vertex labels above $n$. We give the exploded half-edges the remaining labels of the half-edges incident to $v$. Thus, the half-edges receive labels both in $\CMnd$ as well as in $\CMndprime$, and the labels of the half-edges incident to $v\in[n]$ in $\CMndprime$ are a subset of those in $\CMnd$. In total, we thus create an extra $n^+=\sum_{v\in [n]} (d_v-b)_+$ `exploded' vertices of degree 1, and $n'=n+n^+$.  
\smallskip

We then pair the half-edges randomly, in the same way in $\CMnd$ as in $\CMndprime$. This means that when the half-edge with label $x$ is paired to the half-edge with label $y$ in $\CMnd$, the half-edge with label $x$ is also paired to the half-edge with label $y$ in $\CMndprime$, for all $x,y\in[\ell_n]$.

We now check parts (a)-(c). Obviously parts (a) and (b) follow. For part (c), we note that all created vertices have degree 1. Further, for vertices $u,v\in[n]$, if there exists a path in $\CMndprime$ connecting them, then the intermediate vertices have degree at least 2, so that they cannot correspond to exploded vertices. Thus, the same path of paired half-edges also exists in $\CMnd$, so that $u$ and $v$ are connected in $\CMnd$ as well.
%
\qed

\subsection{Coupling of configuration neighbourhoods and branching processes}
\label{app-coupling-B-BP}
In this section, we prove Lemma \ref{lem-coupling-CM} and Remark \ref{rem-extensions-lemma-coupling-CM}, as well as Lemma \ref{lem-coupling-CM-beyond}.

\begin{proof}[Proof of Lemma \ref{lem-coupling-CM} and Remark \ref{rem-extensions-lemma-coupling-CM}] 
Fix $m$. We explain how one can {\em jointly} construct $(\hatGraphn(t),\hatTreen(t))_{t\in[m]}$ {\em given} that we have already constructed $(\hatGraphn(t),\hatTreen(t))_{t\in[m-1]}$. We again label the half-edges from 1 to $\ell_n$ in an arbitrary way, and use these labels in our construction. At any moment in time, the set of half-edges is divided into paired half-edges and unpaired half-edges. Further, for each half-edge, we record its status as {\em real} or {\em ghost}, where the real half-edges correspond to the ones present in {\em both} $\hatGraphn(m-1)$ and $\hatTreen(m-1)$, while the ghost half-edges are only present in $\hatGraphn(m-1)$ or $\hatTreen(m-1)$. We note that, in $\hatTreen$, labelled half-edges with labels in $[\ell_n]$ can be used multiple times, and each of these occurrences of the half-edge receives a label ghost or real. By construction, at most one of the (possibly multiple) copies of a half-edge in $\hatTreen$ can have the status real, all others are ghosts. In $\hatGraphn$, each half-edge can only occur at most once, and the status of the half-edge can then be real or ghost (depending on whether it also arises in $\Treen$ or not). Each real half-edge in $\Graphn$ will have a real partner in $\Treen$, while the ghost half-edges do not have partners in the other exploration. We next explain how to extend the joint construction by one vertex to go from $(\hatGraphn(t),\hatTreen(t))_{t\in[m-1]}$ to $(\hatGraphn(t),\hatTreen(t))_{t\in[m]}$.

To obtain $\hatGraphn(m)$, we take the unpaired half-edge $x_m$ in $\hatGraphn(m-1)$ with smallest label. When this half-edge has the ghost status, then we draw a uniform {\em unpaired} half-edge $y_m'$ and pair $x_m$ to $y_m'$ to obtain $\hatGraphn(m)$, and we give $y_m'$ and all sibling half-edges of $y_m'$ the ghost status (where we recall that the sibling half-edges of a half-edge $y$ are those half-edges unequal to $y$ that are incident to the same vertex as $y$ is). 

When the half-edge $x_m$ has the real status, it needs to be paired both in $\hatGraphn(m)$ and $\hatTreen(m)$. To obtain $\hatGraphn(m)$, this half-edge needs to be paired to a uniform unpaired half-edge unequal to $x_m$, i.e., one that has not been paired so far. For $\hatTreen(m)$, this restriction does not hold. We now show how these two choices can be conveniently coupled.

For $\hatTreen(m)$, we draw a uniform half-edge $y_m$ from the collection of {\em all} half-edges, independently of the past. Let $U_m$ denote the vertex to which $y_m$ is incident. We then let the $m$th individual in $(\hatTreen(t))_{t\in[m-1]}$ have precisely $d_{U_m}-1$ children. Note that $d_{U_m}-1$ has the same distribution as $D_n^\star-1$ and, by construction, the collection $\big(d_{U_t}-1\big)_{t\geq 1}$ is iid. This constructs $\hatTreen(m)$, except for the statuses of the sibling half-edges incident to $U_t$, which we describe below.

For $\hatGraphn(m)$, when $y_m$ is unpaired, i.e., has not yet been paired in $(\hatGraphn(t))_{t\in[m-1]}$, and $y_m$ is unequal to $x_m$, then we let $x_m$ be paired to $y_m$ in $\hatGraphn(m)$, and we have thus also constructed $(\hatGraphn(t),\hatTreen(t))_{t\in[m]}$ with the correct law. We give $y_m$ and all the other half-edges of $U_m$ the status real when $U_m$ has not yet appeared in $\hatGraphn(m-1)$, and otherwise we give them the ghost status. The latter case implies that a cycle appears in $(\hatGraphn(t))_{t\in[m]}$. By construction, such a cycle does not occur in $(\hatTreen(t))_{t\in[m]}$, where re-used vertices are simply repeated several times, and as are their half-edges.

A difference in the coupling arises when $y_m$ has already been paired in $(\hatGraphn(t))_{t\in[m-1]}$, in which case we give all the sibling half-edges of $U_t$ the ghost status. For $\hatGraphn(m)$, we draw a uniform {\em unpaired} half-edge $y_m'$ unequal to $x_m$, and pair $x_m$ to $y_m'$ instead to obtain $\hatGraphn(m)$. We give $y_m'$ and all the sibling half-edges of $y_m'$ the ghost status. Clearly, this gives rise to a difference between $\hatGraphn(m)$ and $\hatTreen(m)$. 

We continue the above exploration algorithm until it terminates at some time $T_n$. Since each step pairs exactly two half-edges, we have that $T_n=|E(\cluster(\Ver))|$, so that $T_n\leq \ell_n/2$. The final result is then $(\hatGraphn(t), \hatTreen(t))_{t\in[T_n]}$. At this moment, however, the branching process tree $(\hatTreen(t))_{t\geq 1}$ has {\em not} been fully explored, since all the tree vertices corresponding to ghost half-edges in $(\hatTreen(t))_{t\geq 1}$ have not been explored. We complete the tree exploration $(\hatTreen(t))_{t\geq 1}$ by iid drawing children of all the ghost tree vertices until the full tree is obtained.

We emphasise that the law of $(\hatTreen(t))_{t\geq 1}$ obtained above is not the same as that of $(\Treen(t))_{t\geq 1}$. Indeed, $(\Treen(t))_{t\geq 1}$ is obtained by a breadth-first exploration of the branching process tree, while, for $(\hatTreen(t))_{t\geq 1}$, the order in which half-edges are paired is chosen so that  $(\hatGraphn(t))_{t\in[T_n]}$ has the same law as the breadth-first graph exploration process $(\Graphn(t))_{t\in[T_n]}$. However, with $\sigma_n$ the first moment that a ghost half-edge appears in the construction, we have that $(\hatTreen(t))_{t\in[\sigma_n]}$ {\em does} have the same law as $(\Treen(t))_{t\in[\sigma_n]}$.
\medskip

There are two sources of differences between $(\hatGraphn(t))_{t\geq 0}$ and $(\hatTreen(t))_{t\geq 0}$:

\begin{description}
\item[{\bf Half-edge re-use:}] 
In the above coupling, a half-edge re-use occurs when $y_m$ had already been paired and is being re-used in the branching process. As a result, for $(\hatGraphn(t))_{t\in[m]}$, we need to redraw $y_m$ to obtain $y_m'$ that is instead used in $(\hatGraphn(t))_{t\in[m]}$;

\item[{\bf Vertex re-use:}] 
A vertex re-use occurs when $U_m=U_{m'}$ for some $m'<m$. In the above coupling, this means that $y_m$ is a half-edge that has not yet been paired in $(\hatGraphn(t))_{t\in[m-1]}$, but it is {\em incident} to a half-edge that has already been paired in $(\hatGraphn(t))_{t\in[m-1]}$.  In particular, the vertex $U_m$ to which it is incident has already appeared in $(\hatGraphn(t))_{t\in[m-1]}$, and it is being re-used in the branching process. In this case, a {\em copy} of $U_{m}$ appears in $(\hatTreen(t))_{t\in[m]}$, while a {\em cycle} appears in $(\hatGraphn(t))_{t\in[m]}$.
\end{description}

We continue by providing a bound on both contributions:

\paragraph{\bf Bounding half-edge re-uses.} 
Up to time $m-1$, exactly $2m-1$ half-edges are forbidden to be used by $(\hatGraphn(t))_{t\leq m}$. The probability that the half-edge $y_m$ equals one of these these $2m-1$ previously chosen half-edges is at most
	\eqn{
	\frac{2m-1}{\ell_n}.
	}
Hence the expected number of half-edge re-uses before time $m_n$ is at most
	\eqn{
	\label{half-edge-re-uses-CM}
	\sum_{m=1}^{m_n}\frac{2m-1}{\ell_n}=\frac{m_n^2}{\ell_n}=o(1),
	}
when $m_n=o(\sqrt{n}).$ Thus, by the first-moment method, the probability that a half-edge re-use occurs is also bounded by $m_n^2/\ell_n$.

\paragraph{\bf Bounding vertex re-uses.} The probability that vertex $i$ is chosen in the $m$th draw is equal to $d_i/\ell_n$. The probability that vertex $i$ is drawn twice before time $m_n$ is at most
	\eqn{
	\frac{m_n(m_n-1)}{2}\frac{d_i^2}{\ell_n^2}.
	}
By the union bound, the probability that a vertex re-use has occurred before time $m_n$ is at most
	\eqn{
	\label{vertex-re-uses-CM}
	\frac{m_n(m_n-1)}{2\ell_n}\sum_{i\in[n]} \frac{d_i^2}{\ell_n}= m_n^2 \frac{n}{\ell_n^2}\expec[D_n^2]=o(1),
	}
by Conditions~\ref{cond-degrees-regcondII}(a)-(b), since $\ell_n=\Theta(n)$, and when $m_n=o(\sqrt{n/\expec[D_n^2]})$. This completes the coupling part of Lemma \ref{lem-coupling-CM}, including the bound on $m_n$ as formulated in Remark \ref{rem-extensions-lemma-coupling-CM}. It is straightforward to check that the exploration can be performed from the two sources $(\Ver_1,\Ver_2)$ independently, thus establishing the requested coupling to two independent branching processes claimed in Remark \ref{rem-extensions-lemma-coupling-CM}.
\end{proof}

Finally, we adapt the above argument to prove Lemma \ref{lem-coupling-CM-beyond}:
\begin{proof}[Proof of Lemma \ref{lem-coupling-CM-beyond}] Define $a_n=(\overline{m}_n^2/\ell_n)^{1+\delta}$ where $\delta>0$. 
Recall from \eqref{Cevent2-n-def-CM} that 
		\eqan{
		\Ccal_n(2)\equiv\Ccal_n(2,1)\cap \Ccal_n(2,2)&=\big\{(|\partial B_k^{\sss(G_n)}(\Ver_2)|)_{k\leq \underline{k}_n}=(|\BP_k^{\sss(2)}|)_{k\leq \underline{k}_n}\big\}\\
		&\qquad\cap \big\{\big||\partial B_{k}^{\sss(G_n)}(\Ver_2)|-|\BP_k^{\sss(2)}|\big|\leq (\overline{m}_n^2/\ell_n)^{1+\delta}~\forall k\in[\overline{k}_n+1]\big\},\nn		}
where $\Ccal_n(2,1)$ occurs whp. Also recall the definition of $\Dcal_n$ right above Lemma \ref{lem-Devent-n-whp}.

We apply a first-moment method and bound
	\eqan{
	\label{prob-Cevent2-compl}
	&\prob(\Ccal_n(2,2)^c\cap \Ccal_n(2,1)\cap \Dcal_n)\\
	&\quad\leq \frac{1}{a_n}\expec\Big[\indicwo{\Ccal_n(2,1)}\big||B_{\bar{k}_n}^{\sss(G_n)}(\Ver_2)|-|\BP_{\bar{k}_n}^{\sss(2)}|\big|\Big]\nn\\
	&\quad=\frac{1}{a_n}\expec\Big[\indicwo{\Ccal_n(2,1)\cap \Dcal_n}\big(|B_{\bar{k}_n}^{\sss(G_n)}(\Ver_2)|-|\BP_{\bar{k}_n}^{\sss(2)}|\big)_++\big(|B_{\bar{k}_n}^{\sss(G_n)}(\Ver_2)|-|\BP_{\bar{k}_n}^{\sss(2)}|\big)_-\Big]\nn\\
	&\quad \leq \frac{1}{a_n}\expec\Big[\indicwo{\Ccal_n(2,1)\cap \Dcal_n}\big(|\hatGraphn(m_n)|-|\hatTreen(m_n)|\big)_++\indicwo{\Ccal_n(2,1)\cap \Dcal_n}\big(|\hatGraphn(m_n)|-|\hatTreen(m_n)|\big)_-\Big]\nn,
	}
where the notation is as in Lemma \ref{lem-coupling-CM} and its proof, and $|\hatGraphn(m_n)|$ and $|\hatTreen(m_n)|$ denote the number of half-edges and individuals found up to the $m_n$th step of the exploration starting from $\Ver_2$, and $m_n=(b+1)\overline{m}_n$, while $x_+=\max\{0,x\}$ and $x_-=\max\{0,-x\}$. 

To bound these expectations, we adapt the proof of Lemma \ref{lem-coupling-CM} to our setting. We start with the first term in \eqref{prob-Cevent2-compl}, for which we use the exploration up to size $\overline{m}_n$ used in the proof of Lemma \ref{lem-coupling-CM}. We note that the only way that $|\hatGraphn(t+1)|-|\hatGraphn(t)|$ can be larger than $|\hatTreen(t+1)|-|\hatTreen(t)|$ is when a half-edge re-use occurs. This gives rise a {\em primary} ghost, which then possibly gives rise to some further {\em secondary} ghosts, i.e., ghost vertices that are found by pairing a ghost half-edge. Since all degrees are bounded by $b$, on $\Ccal_n(2,1)$ where the first $\underline{k}_n$ generations are perfectly coupled, the total number of secondary ghosts, together with the single primary ghost, is at most
	\eqn{
	c_n\equiv \sum_{k=0}^{\bar{k}_n-\underline{k}_n} (b-1)^k=\frac{(b-1)^{\bar{k}_n-\underline{k}_n+1}-1}{b-2},
	}
which tends to infinity. On $\Dcal_n$, we can let $\bar{k}_n-\underline{k}_n\rightarrow \infty$ arbitrarily slowly. Therefore, on $\Ccal_n(2,1)\cap \Dcal_n$,
	\eqn{
	|\hatGraphn(\overline{m}_n)|-|\hatTreen(\overline{m}_n)|\leq c_n \#\{\text{half-edge re-uses up to time }\overline{m}_n\}.
	}
Thus, by \eqref{half-edge-re-uses-CM},
	\eqn{
	\expec\Big[\indicwo{\Ccal_n(2,1)\cap \Dcal_n}\big(|\hatGraphn(\overline{m}_n)|-|\hatTreen(\overline{m}_n)|\big)_+\Big]\leq c_n\frac{m_n^2}{\ell_n}.
	}
We continue with the second term in \eqref{prob-Cevent2-compl}, which is similar. We note that $|\hatGraphn(t+1)|-|\hatGraphn(t)|$ can be smaller than $|\hatTreen(t+1)|-|\hatTreen(t)|$ when a half-edge re-use occurs, or when a vertex re-use occurs. Thus, again using that the total number of secondary ghosts, together with the single primary ghost, is at most $c_n$, on $\Ccal_n(2,1)\cap \Dcal_n$,
	\eqn{
	|\hatTreen(\overline{m}_n)|-|\hatGraphn(\overline{m}_n)|\leq c_n \#\{\text{half-edge and vertex re-uses up to time }\overline{m}_n\}.
	}
As a result, by \eqref{half-edge-re-uses-CM} and \eqref{vertex-re-uses-CM},
	\eqn{
	\expec\Big[\indicwo{\Ccal_n(2,1)\cap \Dcal_n}\big(|\hatGraphn(\overline{m}_n)|-|\hatTreen(\overline{m}_n)|\big)_+\Big]\leq c_n\frac{\overline{m}_n^2}{\ell_n}.
	}
We conclude that
	\eqn{
	\prob(\Ccal_n(2)^c)\leq \frac{1}{a_n} 2c_n\frac{\overline{m}_n^2}{\ell_n}=O\Big(\Big(\frac{\ell_n}{\overline{m}_n^2}\Big)^{\delta/2}\Big)=o(1),
	}
when taking $c_n\rightarrow \infty$ such that $c_n=o((\overline{m}_n^2/\ell_n)^{\delta/2})$.
\end{proof}

\end{appendix}

\paragraph{\bf Acknowledgements.}
The work in this paper was supported by the Netherlands Organisation for Scientific Research (NWO) through Gravitation-grant NETWORKS-024.002.003. I thank Joel Spencer for proposing the `easiest solution' to the giant component problem for the \erdos, which inspired this paper. I thank Shankar Bhamidi for useful and inspiring discussions, Souvik Dhara for literature references, Joost Jorritsma for pointing me to the relation to \cite{JorKomMit25}, and Christian Borgs for suggestions on local convergence and encouragement.



\bibliographystyle{imsart-number} 
\providecommand{\noopsort}[1]{}\def\cprime{$'$}

\end{document}